\pdfoutput=1
\documentclass[12pt,reqno,a4paper]{amsart}

\usepackage{iftex}
\ifPDFTeX
  \usepackage[utf8]{inputenc}
  \usepackage[T1]{fontenc}
\fi
\usepackage[british]{babel}
\usepackage[a4paper,margin=1.12in]{geometry}
\usepackage{microtype}
\usepackage{mathpazo}
\usepackage{euler}
\usepackage{amsmath,amssymb,amsthm,mathtools,mathrsfs}
\usepackage{aliascnt}
\usepackage[dvipsnames]{xcolor}
\usepackage{enumitem}
\usepackage{hyperref}
\hypersetup{
  colorlinks=true,
  linkcolor=blue!60!black,
  citecolor=blue!60!black,
  urlcolor=blue!60!black
}
\usepackage[capitalize,nameinlink]{cleveref}

\numberwithin{equation}{section}
\theoremstyle{plain}
\newtheorem{theorem}{Theorem}[section]
\newaliascnt{lemma}{theorem}
\newtheorem{lemma}[lemma]{Lemma}
\aliascntresetthe{lemma}
\newaliascnt{proposition}{theorem}
\newtheorem{proposition}[proposition]{Proposition}
\aliascntresetthe{proposition}
\newaliascnt{corollary}{theorem}
\newtheorem{corollary}[corollary]{Corollary}
\aliascntresetthe{corollary}

\theoremstyle{definition}
\newaliascnt{definition}{theorem}
\newtheorem{definition}[definition]{Definition}
\aliascntresetthe{definition}
\newaliascnt{example}{theorem}
\newtheorem{example}[example]{Example}
\aliascntresetthe{example}
\newaliascnt{assumption}{theorem}

\aliascntresetthe{assumption}

\theoremstyle{remark}
\newaliascnt{remark}{theorem}

\aliascntresetthe{remark}

\Crefname{theorem}{Theorem}{Theorems}
\Crefname{lemma}{Lemma}{Lemmas}
\Crefname{proposition}{Proposition}{Propositions}
\Crefname{corollary}{Corollary}{Corollaries}
\Crefname{definition}{Definition}{Definitions}
\Crefname{example}{Example}{Examples}
\Crefname{assumption}{Assumption}{Assumptions}
\Crefname{remark}{Remark}{Remarks}

\newcommand{\C}{\mathbb C}
\newcommand{\cO}{\mathcal O}
\newcommand{\cI}{\mathcal I}
\newcommand{\cR}{\mathcal R}
\newcommand{\cA}{\mathcal A}
\newcommand{\OmegaLog}{\Omega_X^1(\log D)}
\newcommand{\TLog}{T_X(-\log D)}
\newcommand{\Fix}{\operatorname{Fix}}
\newcommand{\Res}{\operatorname{Res}}
\newcommand{\Tr}{\operatorname{Tr}}
\newcommand{\Spec}{\operatorname{Spec}}
\newcommand{\ord}{\operatorname{ord}}
\newcommand{\Ind}{\operatorname{Ind}}
\newcommand{\Lef}{\operatorname{Lef}}
\newcommand{\DR}{\operatorname{DR}}
\newcommand{\Sp}{\operatorname{Sp}}
\newcommand{\SpTr}{\operatorname{SpTr}}
\newcommand{\LT}{\operatorname{LT}}
\newcommand{\length}{\operatorname{length}}
\newcommand{\dd}{\mathrm d}

\title[Logarithmic Lefschetz formulae]
{Logarithmic Lefschetz fixed point formulae and resonant boundary indices}

\author{ Elaheh Shahsavaripour}
\address{Dipartimento di Matematica, Universit\`a degli Studi di Bari Aldo Moro, Via E. Orabona 4, I-70125 Bari, Italy}
\email{ shahsavaripour.mat.it@gmail.com}

\subjclass[2020]{Primary 32A27, 55M20; Secondary 32L10, 14C17, 14F40}
\keywords{Holomorphic Lefschetz formula, logarithmic differential forms, Grothendieck residues, isolated fixed points, boundary resonance}
\date{}

\begin{document}

\begin{abstract}
We prove logarithmic Lefschetz fixed point formulae for strict self-maps of compact complex manifolds with simple normal crossings boundary and isolated fixed points. At boundary points, normal rescaling and relative duality give canonical specialisation coefficients for admissible fixed-point ideals.  In the de Rham specialisation, non-resonant boundary terms vanish, whereas resonant terms record normal contact and tangential multiplicity.
\end{abstract}

\maketitle

\section{Introduction}

The holomorphic Lefschetz fixed point formula identifies a global trace on coherent cohomology with the local duality invariants of the intersection of the graph of a holomorphic map with the diagonal. Let \(X\) be a compact complex manifold, let \(f:X\to X\) be holomorphic, and let \(\Phi:f^*E\to E\) be a linearisation of a holomorphic vector bundle. If the fixed-point set is finite, the Lefschetz number
\begin{equation}\label{eq:intro-classical-number}
L(f,E,\Phi)
=
\sum_{q\geq 0}(-1)^q
\Tr\bigl(H^q(\Phi)\circ f^*:H^q(X,E)\longrightarrow H^q(X,E)\bigr)
\end{equation}
is the sum of local Grothendieck residues. In local coordinates \(x_1,\ldots,x_n\) centred at a fixed point \(p\), with
\[
g_i=x_i-f_i(x),
\]
the local term is
\begin{equation}\label{eq:intro-classical-residue}
\LT_p(f,E,\Phi)
=
\Res_p
\left[
\frac{\tau_{\Phi,p}\,\dd x_1\wedge\cdots\wedge\dd x_n}
{g_1,\ldots,g_n}
\right].
\end{equation}
Here \(\tau_{\Phi,p}\) is the intrinsic trace class of the linearisation in the local fixed-point algebra. Formula \eqref{eq:intro-classical-residue} remains valid when \(p\) is degenerate. If \(\det(1-\dd f_p)\neq0\), it reduces to the familiar quotient
\[
\frac{\Tr(\Phi_p)}{\det(1-\dd f_p)}.
\]
We use the residue form of the holomorphic Lefschetz theorem developed by Atiyah--Bott, Toledo--Tong and O'Brian; see \cite{AtiyahBottII,ToledoTong1,ToledoTong2,OBrian}.

Suppose that \(D\subset X\) is a simple normal crossings divisor and that \(f\) is a strict self-map of the pair. Near a point lying on \(k\) branches of \(D\), there are coordinates
\[
(z_1,\ldots,z_k,w_1,\ldots,w_m),
\qquad
D=\{z_1\cdots z_k=0\},
\]
in which
\begin{equation}\label{eq:intro-strict-map}
f^*z_i=z_i a_i(z,w),
\qquad
a_i\in\cO_{X,p}^{\times},
\end{equation}
and \(f^*w_\alpha=F_\alpha(z,w)\). The fixed-point equations are consequently
\begin{equation}\label{eq:intro-log-fixed-equations}
z_i\bigl(1-a_i(z,w)\bigr),
\qquad
w_\alpha-F_\alpha(z,w).
\end{equation}
The boundary affects the fixed-point formula in two distinct ways. First, the factors \(z_i\) permit a reduction to the stratum whenever \(1-a_i\) is invertible. Secondly, the logarithmic cotangent bundle carries a canonical action of \(f\), and its exterior powers produce a polynomial refinement of the local term.

Set
\[
\Lambda_y\OmegaLog
=
\sum_{r=0}^{n}y^r\Omega_X^r(\log D)
\]
in the Grothendieck group. For an \(f\)-linearised bundle \((E,\Phi)\), let
\[
\Phi_r=\Phi\otimes\wedge^r\dd f_{\log}^*:
 f^*\bigl(E\otimes\Omega_X^r(\log D)\bigr)
 \longrightarrow E\otimes\Omega_X^r(\log D).
\]
Define
\begin{equation}\label{eq:intro-y-number}
L_y^{\log}(f,E,\Phi;D)
=
\sum_{r=0}^{n}y^r
\sum_{q\geq0}(-1)^q
\Tr\bigl(H^q(\Phi_r)\circ f^*\bigr).
\end{equation}
Let \(J_f^{\log}\) denote the resulting endomorphism of \(\OmegaLog\) over the fixed-point scheme. The global polynomial formula is
\begin{equation}\label{eq:intro-y-formula}
L_y^{\log}(f,E,\Phi;D)
=
\sum_{p\in\Fix(f)}
\Res_p
\left[
\frac{
\tau_{\Phi,p}\det(1+yJ_f^{\log})\,
\dd x_1\wedge\cdots\wedge\dd x_n
}{g_1,\ldots,g_n}
\right].
\end{equation}
This identity follows from the classical holomorphic Lefschetz formula applied to each logarithmic differential bundle. It places the holomorphic formula and the cohomology of the complement within a single family. The new local information arises from the behaviour of the terms supported on the boundary. We express the result as a sum of ordinary interior indices and logarithmic boundary indices; the latter are the values of the local duality functional on the logarithmic cotangent numerator.

Let \(p\in D\), and let \(S_p\) be the germ of the intersection of the branches of \(D\) passing through \(p\). If
\[
\det(1-\dd_Nf_p)\neq0,
\]
the local polynomial in \eqref{eq:intro-y-formula} is
\begin{equation}\label{eq:intro-nonresonant-formula}
(1+y)^k
\Res_p^{S_p}
\left[
\frac{
\tau_{\Phi|S_p}
\det(1+yJ_{f|S_p})
\det(1-\dd_Nf)^{-1}\,\Omega_{S_p}
}{h_1,\ldots,h_m}
\right].
\end{equation}
In particular, every normally non-resonant fixed point on the boundary contributes zero at \(y=-1\). At \(y=0\), formula \eqref{eq:intro-nonresonant-formula} gives the iterated Poincar\'e-residue reduction to the stratum.

The specialisation \(y=-1\) has a different global meaning. The logarithmic de Rham complex computes the cohomology of the complement \(U=X\setminus D\), and therefore
\begin{equation}\label{eq:intro-open-lefschetz}
L_{-1}^{\log}(f;D)
=
\sum_j(-1)^j
\Tr\bigl(f^*:H^j(U,\C)\to H^j(U,\C)\bigr).
\end{equation}
The cancellation of non-resonant boundary terms is consistent with the fact that such points belong to the compactification rather than to \(U\). Resonant boundary points need not cancel. In one normal dimension, write
\[
f(z)=z a(z),
\qquad a(0)\neq0.
\]
If the origin is an isolated fixed point, the logarithmic de Rham index is
\begin{equation}\label{eq:intro-contact-index}
\Ind^{\mathrm{dR},\log}_{0,D}(f)
=
\ord_0(1-a).
\end{equation}
Thus the boundary contribution measures the order of contact of the normal multiplier with the identity. This formula is the local model for the resonant terms considered below.

A direct reduction to the stratum is unavailable when \(1-a_i\) is not invertible. The branch ideals nevertheless give the intrinsic multifiltration \(F_D^\bullet A_{f,p}\). To obtain an explicit one-parameter specialisation, choose a regular-sequence presentation \(q_1,\ldots,q_n\) of the complete fixed-point ideal and write \(q=A g\), where \(g\) is the coordinate fixed-point sequence and \(\Delta_q=\det A\). If \(d_j\) is the normal order of \(q_j\), put
\[
Q_j(t,z,w)=t^{-d_j}q_j(tz,w).
\]
We require the ideal generated by the normal initial forms of the \(q_j\) to have finite colength. This condition is stronger than the assumption that the fixed point is isolated, but it is directly verifiable and may become valid only after replacing the fixed-point equations in the chosen coordinates by another minimal generating sequence. The resulting complete-intersection deformation is finite and flat. Its relative dualising sheaf carries a canonical trace whose restriction to every fibre is the corresponding Grothendieck residue. The resulting coefficient is independent of the admissible presentation because it equals the intrinsic local duality trace. The change of variables \(z\mapsto tz\) gives
\begin{equation}\label{eq:intro-normal-cone}
\Res_0
\left[
\frac{h\,\dd z\wedge\dd w}{g_1,\ldots,g_n}
\right]
=
\frac{1}{e!}
\left.
\frac{\dd^e}{\dd t^e}
\Res_0
\left[
\frac{h(tz,w)\Delta_q(tz,w)\,\dd z\wedge\dd w}
{Q_1(t),\ldots,Q_n(t)}
\right]
\right|_{t=0},
\end{equation}
where \(e=\sum_jd_j-k\). Formula \eqref{eq:intro-normal-cone} is proved in \Cref{sec:normal-cone}. It applies to coupled normal--tangential equations and records the first potentially non-zero normal coefficient of the relative trace.

Two coupled examples illustrate different aspects of the construction. The germ
\[
f(z,w)=\bigl(z(1-w),z^2\bigr)
\]
has coordinate fixed-point equations whose initial forms have infinite colength, but a relation produces the admissible presentation \((z^3,w-z^2)\); its complete local polynomial is \(2y^2\). The surface germ
\begin{equation}\label{eq:intro-coupled-map}
f(z,w)
=
\bigl(z(1-z(1+w)),\,w-w^2-z\bigr)
\end{equation}
is the two-dimensional case of a family in arbitrary dimension. For integers \(r\geq1\) and \(m_1,\ldots,m_m\geq2\), the family constructed in \Cref{prop:higher-dimensional-contact} has local fixed-point algebra of length
\[
(r+1)m_1\cdots m_m
\]
and logarithmic de Rham index
\[
r m_1\cdots m_m.
\]
Thus the index is the product of the normal contact order and a tangential complete-intersection multiplicity. In the surface case \eqref{eq:intro-coupled-map}, these numbers are four and two, respectively. The tangential fixed equation contains the normal variable, so the calculation is not an immediate product decomposition. A compact blow-up example in \Cref{ex:mixed-blow-up} simultaneously exhibits an interior fixed point, a resonant boundary point and a normally non-resonant boundary point; their three local polynomials sum to \(1-y\).

The normal rescaling construction used here is local and explicit. A fully intrinsic multifiltered specialisation of the local duality functional, including fibres in which the initial equations fail to form a complete intersection, requires the Rees complex of the complete fixed-point ideal rather than the initial forms of a chosen set of generators. That extension is not required for the results proved in this paper and is discussed only in the final section. The formal polynomial \eqref{eq:intro-y-number} is classical in origin; the additional logarithmic information lies in the boundary reduction, the contact indices and the normal-cone formula.

The logarithmic derived diagonal and logarithmic Hochschild theory provide a related categorical setting. Logarithmic Hochschild cohomology and homology for simple normal crossings pairs were constructed in \cite{HablicsekHerrLeonardi}, and a functorial theory of logarithmic correspondences was subsequently developed in \cite{GyengeHablicsekHerr}. The present work is local and residue-theoretic: it starts from the ordinary graph--diagonal intersection and studies how its duality functional changes under the normal filtration induced by the boundary. It is also complementary to logarithmic Bott localisation for vector fields with non-isolated zero varieties \cite{CorreaShahsavaripour}.

The paper is organised as follows. \Cref{sec:local-duality} recalls the local fixed-point algebra and its residue functional. The intrinsic boundary multifiltration is introduced in \Cref{sec:multifiltration}. Strict logarithmic self-maps and the logarithmic cotangent action are treated in \Cref{sec:log-data}, and the polynomial formula is proved in \Cref{sec:y-formula}. \Cref{sec:nonresonant} establishes the reduction to the logarithmic stratum. Normal admissibility, finite flat rescaling, relative duality and numerical canonicity are proved in \Cref{sec:normal-cone}. The holomorphic specialisation is considered separately in \Cref{sec:holomorphic-specialisation}. Resonant indices, including the hidden-relation example and the arbitrary-dimensional coupled family, are treated in \Cref{sec:resonance}. The de Rham specialisation is considered in \Cref{sec:de-rham}. Global examples, culminating in a blow-up containing an interior fixed point together with resonant and non-resonant boundary fixed points, appear in \Cref{sec:examples}.

\section{Local fixed-point duality}\label{sec:local-duality}

Let \(M\) be a complex manifold of dimension \(n\), let \(p\in M\), and let
$
g_1,\ldots,g_n\in\cO_{M,p}
$
have an isolated common zero at \(p\). The quotient
\begin{equation}\label{eq:local-algebra}
A_g=\cO_{M,p}/(g_1,\ldots,g_n)
\end{equation}
is an Artinian complete-intersection algebra. If \(x_1,\ldots,x_n\) are local coordinates, the Grothendieck residue is the functional
\begin{equation}\label{eq:residue-functional}
\lambda_g:A_g\longrightarrow\C,
\qquad
\overline h\longmapsto
\Res_p
\left[
\frac{h\,\dd x_1\wedge\cdots\wedge\dd x_n}{g_1,\ldots,g_n}
\right].
\end{equation}
It is the local duality trace of the zero-dimensional complete intersection. We shall use the transformation law in the following form.

\begin{lemma}\label{lem:transformation-law}
Suppose that \(q_i=\sum_j a_{ij}g_j\), where \(q_1,\ldots,q_n\) also have an isolated common zero at \(p\). Then
\[
\Res_p
\left[
\frac{h\,\dd x_1\wedge\cdots\wedge\dd x_n}{g_1,\ldots,g_n}
\right]
=
\Res_p
\left[
\frac{h\det(a_{ij})\,\dd x_1\wedge\cdots\wedge\dd x_n}{q_1,\ldots,q_n}
\right].
\]
\end{lemma}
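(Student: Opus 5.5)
This is the classical transformation law for Grothendieck residues. I will prove it by reducing to the case where the comparison matrix is close to the identity, and then using a deformation argument together with the cohomological (Dolbeault) description of the residue. Throughout I work in a fixed small polydisc neighbourhood \(V\) of \(p\) on which all the data are holomorphic. I shrink \(V\) so that \(p\) is the only common zero in \(\overline V\) of \(g\) and also of \(q\).

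\emph{Step 1: residues as \v{C}ech--Dolbeault pairings.} I recall that
\[
\Res_p\left[\frac{h\,\dd x}{g_1,\ldots,g_n}\right]
=\frac{1}{(2\pi i)^n}\int_{\Gamma_\varepsilon(g)}\frac{h\,\dd x}{g_1\cdots g_n},
\qquad
\Gamma_\varepsilon(g)=\{|g_j|=\varepsilon_j\}.
\]
This equals the pairing of \(h\,\dd x\) with the class in \(H^{n-1}(V\setminus\{p\},\cO)\) represented by the \v{C}ech cocycle \(1/(g_1\cdots g_n)\) on the cover \(U_j=\{g_j\neq0\}\). Equivalently, it is represented by a Bochner--Martinelli type Dolbeault form \(\omega_g\) built from \(\bar g_j/|g|^2\).

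\emph{Step 2: the Cramer-rule identity.} Let \(A=(a_{ij})\) and let \(B=\operatorname{adj}(A)\), so that \(Bq=\det(A)\,g\). I will show the cohomological identity
\[
[\det(A)\,\omega_q]=[\omega_g]
\quad\text{in } H^{n-1}(V\setminus\{p\},\cO).
\]
\begin{itemize}[leftmargin=*]
\item \emph{Linear homotopy.} I would first treat the case where \(g\) is a common zero-set-preserving deformation. Consider the family \(q^{(s)}=A_s g\) with \(A_s=(1-s)I+sA\). The endpoint \(s=0\) is tautological and \(s=1\) is the claim.
\item \emph{Invariance and derivative.} The residue of \(h\det(A_s)/q^{(s)}\) is invariant under deformation as long as the zero set stays at \(p\) (Stokes on the cycle \(\Gamma\)). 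I would differentiate in \(s\) and show the derivative is \(\bar\partial\)-exact, or a \v{C}ech coboundary.
\end{itemize}

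\emph{Anticipated obstacle.} \(A_s\) need not keep \(p\) isolated, because \(\det A_s\) may vanish at \(p\). I would therefore instead use the standard route: the residue is characterised by (i) the local duality pairing being nondegenerate on \(A_g\), and (ii) the normalisation \(\Res[\det(\partial g/\partial x)\,\dd x/g]=\dim A_g\) together with additivity. Concretely, I would apply the argument of Griffiths--Harris, Chapter 5.

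\emph{Step 3: reduction to generic perturbations.} Perturb \(q\) to \(q_\epsilon=q+\epsilon c\), where \(c\) is a generic constant vector, so that the zeros split into simple points near \(p\). At a simple zero \(x_0\) of \(q_\epsilon\), the residue is \(h\det(a)/\det(\partial q_\epsilon/\partial x)(x_0)\). I then compare this with the \(g\)-side via the Jacobian chain rule
\[
\partial q/\partial x = A\,\partial g/\partial x + (\partial A)g.
\]
At a common zero of \(g\), the second term vanishes. This yields the identity pointwise, for simple zeros with \(g(x_0)=0\).

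For the general case I use the global-residue-sum continuity: the sum of residues over the zeros in \(V\) is the integral over a fixed cycle, hence continuous in \(\epsilon\). However, the perturbation must be applied compatibly to both sides. I would do this by perturbing \(g\) to \(g_\epsilon=g-\epsilon b\) and setting \(q_\epsilon=A g_\epsilon\). Their zeros coincide wherever \(\det A\neq0\). The zeros where \(\det A=0\) contribute residues with numerator \(\det A\) vanishing, so these contributions are zero at simple points. Letting \(\epsilon\to0\) then gives the lemma.

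The main obstacle is exactly this handling of the extra zeros of \(q_\epsilon\) on \(\{\det A=0\}\). It requires a genericity argument ensuring those zeros are simple, and then the numerator \(\det A\) kills their contributions.
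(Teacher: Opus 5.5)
Your Step 2 is abandoned. The characterisation you offer in its place (non-degeneracy plus \(\Res[\det(\partial g/\partial x)\,\dd x/g]=\dim A_g\)) does not pin down the residue: for \(g=x^2\), every functional \(\overline h\mapsto\Res_0[(1+cx)h\,\dd x/x^2]\) satisfies both conditions. So everything rests on Step 3, which is the classical perturbation argument. It is complete when \(\det A(p)\neq0\), and that is the only case the paper ever uses (units, determinant one, invertible \(\Delta_q\)). In that case \(A\) is invertible near \(p\), the zeros of \(q_\epsilon=Ag_\epsilon\) and of \(g_\epsilon\) coincide, and your Jacobian computation plus continuity on both sides finishes.

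For general \(A\), which is how the lemma is stated, your handling of \(\{\det A=0\}\) fails. The genericity statement you rely on is false. For \(n=1\), \(A=x^2\), \(g=x\), one gets \(q_\epsilon=x^2(x-\epsilon b)\), whose zero at \(0\) is double for every \(b\). Likewise \(A=\operatorname{diag}(x_1^2,1)\), \(g=(x_1,x_2)\) gives a double extra zero in dimension two. Worse, a zero \(x_0\) of \(g_\epsilon\) lying on \(\{\det A=0\}\) is never a simple zero of \(q_\epsilon\), because there \(\partial q_\epsilon/\partial x=A\,\partial g_\epsilon/\partial x\) is singular. Its correct contribution is \(\Res_{x_0}[h\,\dd x/g_\epsilon]\), which is generally non-zero, so declaring it zero gives the wrong total.

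The missing idea is that the extra zeros die for an algebraic reason, not a genericity one. Suppose \(u\) is a holomorphic vector with \(u(x_0)\neq0\) and \(Au\) has an isolated zero at \(x_0\). Then \(\operatorname{adj}(A)\,(Au)=\det(A)\,u\), and some \(u_k\) is a unit at \(x_0\). Hence \(\det A\) lies in the ideal of \(\cO_{x_0}\) generated by the components of \(Au\). Since a local residue annihilates its denominator ideal, \(\Res_{x_0}[h\det A\,\dd x/Au]=0\) whatever the multiplicity.

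The cleanest way to use this is to prove the case \(\det A(p)\neq0\) by your Step 3 and then replace \(A\) by \(A+\delta I\). For small \(\delta\neq0\) the matrix \(A(p)+\delta I\) is invertible, so the residue at \(p\) of \(h\det(A+\delta I)\,\dd x/(q+\delta g)\) equals \(\Res_p[h\,\dd x/g]\). Every other zero of \(q+\delta g=(A+\delta I)g\) in \(V\) has \(g\neq0\), so it contributes nothing by the adjugate argument. The continuity principle as \(\delta\to0\) then gives the lemma.

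If you keep your own perturbation \(g_\epsilon\) instead, you must also choose \(b\) so that \(g^{-1}(\epsilon b)\) avoids \(\{\det A=0\}\), which requires first knowing \(\det A\not\equiv0\). With either repair you obtain a self-contained proof of what the paper simply cites from Griffiths--Harris and Hartshorne.
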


\begin{proof}
This is the standard transformation law for Grothendieck residues. It follows either from the multidimensional Cauchy integral or from the functoriality of the local duality trace; see \cite[Chapter~5]{GriffithsHarris} and \cite{HartshorneResidues}.
\end{proof}

\begin{corollary}\label{cor:units}
If \(u_1,\ldots,u_n\in\cO_{M,p}^{\times}\), then
\[
\Res_p
\left[
\frac{h\,\dd x_1\wedge\cdots\wedge\dd x_n}{u_1g_1,\ldots,u_ng_n}
\right]
=
\Res_p
\left[
\frac{h(u_1\cdots u_n)^{-1}\,\dd x_1\wedge\cdots\wedge\dd x_n}{g_1,\ldots,g_n}
\right].
\]
\end{corollary}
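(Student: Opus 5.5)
The plan is to derive \Cref{cor:units} from \Cref{lem:transformation-law}, but with the roles of the two sequences swapped so that the inverse units end up in the numerator. Set \(q_i=u_ig_i\). Then \(g_i=u_i^{-1}q_i\), so \(g_i=\sum_j b_{ij}q_j\) with \(b=\operatorname{diag}(u_1^{-1},\ldots,u_n^{-1})\), a diagonal matrix over \(\cO_{M,p}\).

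Before applying the lemma I will check its hypotheses. Since each \(u_i\) is a unit near \(p\), the sequence \((u_ig_i)\) has the same common zero locus near \(p\) as \((g_i)\), so it also has an isolated common zero at \(p\). Moreover \((u_1g_1,\ldots,u_ng_n)=(g_1,\ldots,g_n)\) as ideals, so both residue symbols are defined.

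Next I will apply \Cref{lem:transformation-law} with \(u_ig_i\) as the old denominators and \(g_i\) as the new ones, taking the numerator to be \(h\). This gives
\[
\Res_p\left[\frac{h\,\dd x_1\wedge\cdots\wedge\dd x_n}{u_1g_1,\ldots,u_ng_n}\right]
=
\Res_p\left[\frac{h\det(b)\,\dd x_1\wedge\cdots\wedge\dd x_n}{g_1,\ldots,g_n}\right].
\]
Since \(\det(b)=(u_1\cdots u_n)^{-1}\), this is exactly the claimed identity.

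There is no real obstacle here. The only point needing care is the direction of the substitution: applying the lemma naively with \(q_i=u_ig_i\) in the stated direction produces the equivalent form \(\Res[h/(g)]=\Res[h\,u_1\cdots u_n/(ug)]\). If one starts from that form, one recovers the corollary by replacing \(h\) with \(h(u_1\cdots u_n)^{-1}\), which is legitimate because the product of the \(u_i\) is a unit.
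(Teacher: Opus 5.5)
Your proposal is correct and is the intended argument: the paper gives no separate proof and treats this as an immediate consequence of \Cref{lem:transformation-law} applied to the diagonal matrix relating \((u_1g_1,\ldots,u_ng_n)\) and \((g_1,\ldots,g_n)\). Either direction works. You can use \(\operatorname{diag}(u_1^{-1},\ldots,u_n^{-1})\) directly, or use \(\operatorname{diag}(u_1,\ldots,u_n)\) and then replace \(h\) by \(h(u_1\cdots u_n)^{-1}\), as you note.
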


The following lemma gives the reduction along a coordinate complete intersection.

\begin{lemma}\label{lem:iterated-reduction}
Let \((z_1,\ldots,z_k,w_1,\ldots,w_m)\) be coordinates at \(p\), and put \(S=\{z_1=\cdots=z_k=0\}\). Let \(q_1,\ldots,q_m\) be holomorphic germs such that
\[
h_\alpha(w)=q_\alpha(0,w)
\]
have an isolated common zero on \(S\). Then
\begin{align*}
&\Res_p
\left[
\frac{H(z,w)\,\dd z_1\wedge\cdots\wedge\dd z_k\wedge
\dd w_1\wedge\cdots\wedge\dd w_m}
{z_1,\ldots,z_k,q_1,\ldots,q_m}
\right]
\\
&\hspace{35mm}=
\Res_p^S
\left[
\frac{H(0,w)\,\dd w_1\wedge\cdots\wedge\dd w_m}
{h_1,\ldots,h_m}
\right].
\end{align*}
\end{lemma}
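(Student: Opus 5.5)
The plan is to use the transformation law (\Cref{lem:transformation-law}) to replace the denominators \(q_\alpha\) by \(h_\alpha\), after which the residue splits as a product of a one-variable residue in each \(z_i\) and the residue on \(S\).

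\emph{Checking isolation.} The common zero locus of \(z_1,\ldots,z_k,q_1,\ldots,q_m\) is contained in \(S\). On \(S\) it coincides with the zero locus of the \(h_\alpha\), which is isolated by hypothesis. Hence the left-hand residue is defined. For the same reason the sequence \(z_1,\ldots,z_k,h_1,\ldots,h_m\), with each \(h_\alpha\) viewed as a germ in \((z,w)\) independent of \(z\), also has an isolated common zero at \(p\).

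\emph{Rewriting the denominators.} Write
\[
q_\alpha(z,w)=h_\alpha(w)+\sum_{i=1}^k z_i\,b_{\alpha i}(z,w),
\]
which is possible by Hadamard's lemma. Then
\[
(z_1,\ldots,z_k,q_1,\ldots,q_m)^T=A\,(z_1,\ldots,z_k,h_1,\ldots,h_m)^T,
\qquad
A=\begin{pmatrix} I_k & 0\\ B & I_m\end{pmatrix}.
\]
This matrix is block unitriangular, so \(\det A=1\). Applying \Cref{lem:transformation-law} with \(g=(z,h)\) and new sequence \((z,q)\) therefore gives
\[
\Res_p\left[\frac{H\,\dd z\wedge\dd w}{z,q}\right]=\Res_p\left[\frac{H\,\dd z\wedge\dd w}{z,h}\right].
\]

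\emph{Reducing \(H\) and splitting.} Since \(H-H(0,w)\) lies in the ideal \((z_1,\ldots,z_k)\), it vanishes in the local algebra, and the residue functional only depends on the class of the numerator. So we may replace \(H\) by \(H(0,w)\). The denominators are now separated: \(z_1,\ldots,z_k\) involve only the \(z\)-variables and \(h_1,\ldots,h_m\) only the \(w\)-variables. For such a product situation the residue factorises as the product of \(k\) one-variable residues \(\Res[\dd z_i/z_i]=1\) and \(\Res^S_p[H(0,w)\,\dd w/h]\). This is seen directly from the Cauchy integral over the cycle
\[
\{|z_i|=\varepsilon\}\times\{|h_\alpha(w)|=\delta\},
\]
which is a product cycle, so Fubini applies. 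The sign convention for the ordering \(\dd z\wedge\dd w\) matches the ordering of the denominators, so no sign arises.

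\emph{Expected obstacle.} The only delicate point is the factorisation step. For the residue over \(h\), which is not a coordinate system, one must check that the Grothendieck residue defined via the cycle \(\{|h_\alpha|=\delta\}\) in the product domain agrees with the product of the residues. Using the integral definition, this is immediate, because the domain \(\{|z_i|<\varepsilon\}\times\{|h_\alpha|<\delta\}\) is relatively compact for small constants. Alternatively, one can invoke the standard product property of local duality (see \cite{HartshorneResidues}).
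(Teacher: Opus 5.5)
Your proposal is correct and follows essentially the same route as the paper. Both write \(q_\alpha=h_\alpha+\sum_i z_i c_{\alpha i}\), use the block unitriangular change of generators together with \Cref{lem:transformation-law} to replace the \(q_\alpha\) by the \(h_\alpha\), and then eliminate the \(z\)-variables via the one-variable Cauchy formula; your step of replacing \(H\) by \(H(0,w)\) and applying Fubini over the product cycle is the same reduction, and your explicit check that both sequences have an isolated common zero is a detail the paper leaves implicit.
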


\begin{proof}
Write
\[
q_\alpha(z,w)=h_\alpha(w)+\sum_{i=1}^kz_i c_{\alpha i}(z,w).
\]
The systems \((z_1,\ldots,z_k,q_1,\ldots,q_m)\) and \((z_1,\ldots,z_k,h_1,\ldots,h_m)\) are related by a block triangular matrix of determinant one. By \Cref{lem:transformation-law}, the first residue is unchanged when the \(q_\alpha\) are replaced by the \(h_\alpha\). Successive application of the one-variable Cauchy formula in the variables \(z_1,\ldots,z_k\) proves the result.
\end{proof}

Let \(f:X\to X\) be holomorphic, and let \(p\) be an isolated fixed point. In local coordinates, put
\begin{equation}\label{eq:fixed-ideal}
I_{f,p}=(x_1-f_1,\ldots,x_n-f_n),
\qquad
A_{f,p}=\cO_{X,p}/I_{f,p}.
\end{equation}
Let \(\Phi:f^*E\to E\) be a linearisation. A local frame of \(E\) represents \(\Phi\) by a matrix \(M(x)\). Its trace determines a class
\begin{equation}\label{eq:trace-class}
\tau_{\Phi,p}=[\Tr M]\in A_{f,p}.
\end{equation}

\begin{lemma}\label{lem:trace-class}
The class \(\tau_{\Phi,p}\) is independent of the local frame of \(E\).
\end{lemma}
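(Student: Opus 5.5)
Under a change of local frame of \(E\), given by an invertible matrix \(P(x)\) with entries in \(\cO_{X,p}\), the pull-back \(f^*E\) acquires the induced frame, whose transition matrix is \(f^*P=P\circ f\). So the first step is to write down how the matrix of \(\Phi\) changes. If \(e\) is the old frame and \(e'=eP\) the new one, then \(f^*e'=(f^*e)(P\circ f)\). Writing \(\Phi(f^*e)=eM\), I expect
\[
\Phi(f^*e')=eM(P\circ f)=e'P^{-1}M(P\circ f).
\]
The new matrix is therefore
\[
M'(x)=P(x)^{-1}M(x)P(f(x)).
\]
Everything reduces to comparing \(\Tr M'\) with \(\Tr M\) modulo \(I_{f,p}\).

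The key observation is that \(P(f(x))\equiv P(x)\) entrywise modulo \(I_{f,p}\). For any germ \(u\in\cO_{X,p}\), Hadamard's lemma gives
\[
u(x)-u(f(x))=\sum_i\bigl(x_i-f_i(x)\bigr)v_i(x)
\]
for suitable holomorphic \(v_i\). To see this, write \(u(x)-u(y)=\sum_i(x_i-y_i)v_i(x,y)\) on a neighbourhood of the diagonal and substitute \(y=f(x)\); this is legitimate because \(f(p)=p\). Consequently, in the matrix ring over \(A_{f,p}\) we have \(\overline{M'}=\overline P^{-1}\,\overline M\,\overline P\). Since \(A_{f,p}\) is commutative, the trace is invariant under conjugation by an invertible matrix, so \([\Tr M']=[\Tr M]\). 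This proves independence of the frame.

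Independence of the coordinates \(x_1,\dots,x_n\) should also be recorded. It follows because \(I_{f,p}\) is generated intrinsically: it is the ideal generated by \(u-u\circ f\) for all \(u\in\cO_{X,p}\). The inclusion of this ideal in \((x_i-f_i)\) is exactly the Hadamard step above, and the reverse inclusion is immediate. Thus \(A_{f,p}\) does not depend on the chosen coordinates.

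The main point requiring care is the bookkeeping of the frame change, namely getting \(P\circ f\) on the correct side so that it is genuinely a conjugation after reduction. The Hadamard congruence itself is routine once \(f(p)=p\) is used.
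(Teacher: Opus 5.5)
Your proof is correct and follows the same route as the paper: a change of frame gives \(M'=P^{-1}M\,(P\circ f)\), which becomes a conjugation after reduction modulo \(I_{f,p}\), so the trace class is unchanged. Your Hadamard-lemma argument for \(P\circ f\equiv P\), and your remark that \(I_{f,p}\) is generated by all \(u-u\circ f\) (hence is independent of coordinates), spell out steps that the paper's one-line argument takes for granted.
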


\begin{proof}
Under a change of frame by a holomorphic matrix \(G\), the matrix of the linearisation changes by
\[
M'(x)=G(x)^{-1}M(x)G(f(x)).
\]
In the quotient \(A_{f,p}\), one has \(G(f(x))=G(x)\). Thus \(M'\) is conjugate to \(M\) over \(A_{f,p}\), and their traces have the same class.
\end{proof}

The residue form of the holomorphic Lefschetz theorem may therefore be written intrinsically as follows.

\begin{theorem}\label{thm:classical-lefschetz}
Let \(X\) be compact, let \(f:X\to X\) be holomorphic with finite fixed-point set, and let \(\Phi:f^*E\to E\) be a linearisation. Then
\begin{equation}\label{eq:classical-lefschetz}
L(f,E,\Phi)
=
\sum_{p\in\Fix(f)}
\Res_p
\left[
\frac{\tau_{\Phi,p}\,\dd x_1\wedge\cdots\wedge\dd x_n}
{x_1-f_1,\ldots,x_n-f_n}
\right].
\end{equation}
\end{theorem}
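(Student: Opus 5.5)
This is the holomorphic Lefschetz theorem in residue form (Atiyah--Bott, Toledo--Tong, O'Brian). The plan is to reduce it to the Toledo--Tong/O'Brian localisation and then check that the local term agrees with the intrinsic expression in \eqref{eq:classical-lefschetz}.

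\textbf{Step 1 (global trace as a diagonal integral).} Represent \(\sum_q(-1)^q\Tr(H^q(\Phi)\circ f^*)\) as a pairing of the graph of \(f\) with the diagonal in \(X\times X\). Concretely, resolve the diagonal by a Dolbeault parametrix (a Bochner--Martinelli type kernel \(K\) for \(E\) on \(X\times X\), with \(\bar\partial K=[\Delta]-\text{harmonic part}\)). The alternating trace of \(\Phi\circ f^*\) on Dolbeault cohomology is then the integral of the harmonic kernel over the graph \(\Gamma_f\), twisted by \(\Phi\). By Stokes' theorem this equals \(\lim_{\varepsilon\to0}\) of boundary integrals of \(K\) pulled back to \(\Gamma_f\) over small spheres around the finitely many points of \(\Gamma_f\cap\Delta\). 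Compactness of \(X\) and finiteness of \(\Fix(f)\) make the complement integrals vanish. This is exactly the localisation in \cite{ToledoTong1,ToledoTong2}, and I would quote it rather than redo the analysis.

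\textbf{Step 2 (identification of the local term).} Near a fixed point \(p\), trivialise \(E\) by a frame, so that \(\Phi\) is the matrix \(M(x)\). In coordinates the pulled-back kernel is the Bochner--Martinelli form in the variables \(g=x-f(x)\), multiplied by \(\Tr M(x)\,\dd x_1\wedge\cdots\wedge\dd x_n\). Its limiting sphere integral is the Bochner--Martinelli representation of the Grothendieck residue \(\Res_p[\Tr M\,\dd x/(g_1,\ldots,g_n)]\); the two agree for isolated zeros even in degenerate cases (Griffiths--Harris, Ch.~5). One point needs care: the spheres \(\{|x|=\varepsilon\}\) must be replaced by the tube \(\{|g_i|=\varepsilon_i\}\). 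This is the usual homotopy argument, valid because \(p\) is an isolated zero of \(g\).

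\textbf{Step 3 (intrinsic form).} The residue depends only on the class of the numerator in \(A_{f,p}\), since \(\Res_p[h_jg_j\,\dd x/(g)]=0\). By \Cref{lem:trace-class} the numerator class is \(\tau_{\Phi,p}\). Independence of coordinates follows from \Cref{lem:transformation-law}: under a coordinate change \(x\mapsto x'\), one has \(x'-f'=A(x-f)\) with \(\det A\equiv\det(\partial x'/\partial x)\) modulo \(I_{f,p}\). This Jacobian cancels the one from \(\dd x'\). Summing over \(\Fix(f)\) gives \eqref{eq:classical-lefschetz}.

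\textbf{Main obstacle.} The analytic step is Step 1 together with the degenerate case of Step 2: showing that the limit of the kernel integral is the residue when \(\det(1-\dd f_p)=0\). My first attempt would be to perturb \(f\) locally to a map with nondegenerate fixed points. One then uses continuity of the residue in families (the residue of a finite flat family is holomorphic in parameters) and the Atiyah--Bott nondegenerate formula \(\Tr\Phi_p/\det(1-\dd f_p)\) summed over the split points. The catch is that a global holomorphic perturbation may not exist, so the perturbation has to be made at the level of the \(C^\infty\) kernel integral, where local deformation is permitted.
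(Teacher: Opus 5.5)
Your proposal matches the paper's proof. The paper likewise quotes the Toledo--Tong/O'Brian residue form of the holomorphic Lefschetz theorem and only checks that the local term is intrinsic, using \Cref{lem:trace-class} for the frame of \(E\) and \Cref{lem:transformation-law} for the coordinates, exactly as in your Step~3. The perturbation worry in your final paragraph is unnecessary: the cited theorem, and the Bochner--Martinelli identification you already invoke in Step~2, cover degenerate isolated fixed points directly.
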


\begin{proof}
This is the holomorphic Lefschetz theorem in its local duality form; see \cite{ToledoTong1,ToledoTong2,OBrian}. The coordinate independence of each local term follows from the transformation law for the residue, while \Cref{lem:trace-class} gives independence of the frame of \(E\).
\end{proof}

\section{Boundary multifiltrations}\label{sec:multifiltration}

Let \(p\in D\) lie on the local branches \(D_1,\ldots,D_k\), and let
\[
R=\cO_{X,p},
\qquad
A_{f,p}=R/I_{f,p}.
\]
Write \(I_i\subset R\) for the ideal of \(D_i\) at \(p\). The boundary determines a decreasing multifiltration of the local fixed-point algebra.

\begin{definition}\label{def:boundary-multifiltration}
For \(\boldsymbol\nu=(\nu_1,\ldots,\nu_k)\in\mathbb N^k\), set
\begin{equation}\label{eq:boundary-multifiltration}
F_D^{\boldsymbol\nu}A_{f,p}
=
\operatorname{im}
\left(
I_1^{\nu_1}\cdots I_k^{\nu_k}
\longrightarrow A_{f,p}
\right).
\end{equation}
The corresponding multi-Rees algebra is
\begin{equation}\label{eq:multi-Rees-algebra}
\cR_D(A_{f,p})
=
\bigoplus_{\boldsymbol\nu\in\mathbb N^k}
F_D^{\boldsymbol\nu}A_{f,p}\,\mathbf T^{\boldsymbol\nu}.
\end{equation}
\end{definition}

The triple
\begin{equation}\label{eq:filtered-duality-datum}
\bigl(A_{f,p},\lambda_{f,p},F_D^\bullet\bigr)
\end{equation}
is the local duality datum together with the filtration imposed by the boundary.

\begin{proposition}\label{prop:intrinsic-multifiltration}
The multifiltration \eqref{eq:boundary-multifiltration} is independent of adapted coordinates. A permutation of the local branches permutes its indices. It is also compatible with isomorphisms of germs of strict logarithmic pairs carrying the fixed-point scheme and the boundary branches to one another.
\end{proposition}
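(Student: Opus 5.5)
The plan is to observe that every ingredient of \eqref{eq:boundary-multifiltration} is defined without reference to coordinates. The ring \(R=\cO_{X,p}\) is the stalk of the structure sheaf. Each branch ideal \(I_i\) is the ideal of the germ of the branch \(D_i\) at \(p\), which is determined by \(D\) alone; in adapted coordinates it equals \((z_i)\). The fixed-point ideal \(I_{f,p}\) is the ideal of the graph--diagonal intersection, that is, the ideal generated by \(\{h-h\circ f: h\in R\}\).

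\emph{Step 1: \(I_{f,p}\) does not depend on coordinates.} For any coordinates \(x_j\), the elements \(x_j-f^*x_j\) generate the same ideal as all \(h-f^*h\). Indeed, expanding \(h(x)-h(f(x))\) by Hadamard's lemma gives \(h-f^*h=\sum_j c_j(x)\,(x_j-f_j(x))\), so every \(h-f^*h\) lies in \((x_1-f_1,\ldots,x_n-f_n)\), and the reverse inclusion is trivial. Hence the ideal in \eqref{eq:fixed-ideal} is intrinsic, and so is \(A_{f,p}\).

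\emph{Step 2: consequences for the multifiltration.} The product \(I_1^{\nu_1}\cdots I_k^{\nu_k}\) is a product of intrinsic ideals, so its image in the intrinsic ring \(A_{f,p}\) is intrinsic. A different choice of adapted coordinates changes each \(z_i\) only by a unit multiple, \(z_i'=u_iz_i\), up to a relabelling of the branches. Therefore \(I_i=(z_i)=(z_i')\), and the filtration is literally the same. If the branches are relabelled by a permutation \(\sigma\), then \(I_i\) becomes \(I_{\sigma(i)}\). Since ideals in \(R\) commute, \(F^{\boldsymbol\nu}\) turns into \(F^{\sigma\cdot\boldsymbol\nu}\), and the multi-Rees algebra \eqref{eq:multi-Rees-algebra} is carried isomorphically onto itself by permuting the variables \(\mathbf T\).

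\emph{Step 3: compatibility with isomorphisms.} Let \(\phi:(X,D,p)\to(X',D',p')\) be an isomorphism of germs of strict logarithmic pairs with \(f'\circ\phi=\phi\circ f\). The map \(\phi^*:R'\to R\) sends \(h-f'^*h\) to \(\phi^*h-f^*\phi^*h\), so by Step 1 it carries \(I_{f',p'}\) onto \(I_{f,p}\). It also sends the branch ideal \(I'_{j}\) to \(I_{\sigma(j)}\) for the induced bijection \(\sigma\) of branches, and it is multiplicative on products of ideals. It therefore induces an isomorphism \(A_{f',p'}\cong A_{f,p}\) carrying \(F^{\boldsymbol\nu}\) to \(F^{\sigma\boldsymbol\nu}\).

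The only non-formal point is Step 1, which rests on the Hadamard expansion of \(h(x)-h(f(x))\) in the differences \(x_j-f_j(x)\). Everything else is functoriality of images of ideals.
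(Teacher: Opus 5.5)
Your proposal is correct and follows the same route as the paper, which simply notes that the branch ideals and the quotient map \(R\to A_{f,p}\) are intrinsic and are carried to their counterparts by an isomorphism of germs; your Hadamard-lemma argument spells out the coordinate independence of \(I_{f,p}\), which the paper takes for granted. One difference: your Step~3 assumes a conjugacy \(f'\circ\phi=\phi\circ f\), whereas the statement only asks that \(\phi\) carry the fixed-point scheme to the fixed-point scheme, in which case \(\phi^*I_{f',p'}=I_{f,p}\) holds by hypothesis and the rest of your argument applies verbatim.
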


\begin{proof}
Each term in \eqref{eq:boundary-multifiltration} is defined from the intrinsic branch ideals and the quotient map \(R\to A_{f,p}\). An isomorphism of germs carries these ideals and the fixed-point ideal to their counterparts. This proves the proposition.
\end{proof}

A positive integral weight \(\mathbf c=(c_1,\ldots,c_k)\) converts the multifiltration into the decreasing filtration
\begin{equation}\label{eq:weighted-boundary-filtration}
F_{D,\mathbf c}^{\ell}A_{f,p}
=
\sum_{\mathbf c\cdot\boldsymbol\nu\geq\ell}
F_D^{\boldsymbol\nu}A_{f,p}.
\end{equation}
In adapted coordinates, this filtration is realised by the rescaling
\[
(z_1,\ldots,z_k,w)
\longmapsto
(t^{c_1}z_1,\ldots,t^{c_k}z_k,w).
\]
We shall use the equal-weight case \(c_1=\cdots=c_k=1\). The full multi-Rees construction is discussed in \Cref{sec:further}.

The initial forms of a displayed set of fixed-point equations need not generate the complete initial ideal. The following germ will be used again in \Cref{sec:resonance}.

\begin{example}\label{ex:hidden-initial-relation}
Let
$f(z,w)=\bigl(z(1-w),z^2\bigr),
\qquad
D=\{z=0\}.
$
The map is strict, and its fixed-point ideal is
\begin{equation}\label{eq:hidden-fixed-ideal}
I_{f,0}=(zw,w-z^2).
\end{equation}
The origin is isolated and
$
A_{f,0}\simeq\C\{z\}/(z^3).$
With respect to the \(z\)-adic order, the initial forms of the displayed generators generate only \((w)\). Nevertheless,
$
zw-z(w-z^2)=z^3,$
so the complete initial ideal is
\begin{equation}\label{eq:hidden-complete-initial-ideal}
\operatorname{in}_D(I_{f,0})=(w,z^3).
\end{equation}
Equivalently, the same fixed-point ideal has the regular-sequence presentation
\begin{equation}\label{eq:hidden-admissible-presentation}
q_1=z^3,
\qquad
q_2=w-z^2,
\end{equation}
whose initial forms have finite colength. Thus relations among the fixed-point equations in the chosen coordinates may have to be incorporated before normal specialisation.
\end{example}

\section{Strict logarithmic self-maps}\label{sec:log-data}

Let \(X\) be a complex manifold and let
$
D=D_1+\cdots+D_r
$
be a simple normal crossings divisor. The logarithmic cotangent bundle is locally free. In coordinates
$
D=\{z_1\cdots z_k=0\},
$
it has frame
\begin{equation}\label{eq:log-frame}
\frac{\dd z_1}{z_1},\ldots,\frac{\dd z_k}{z_k},
\dd w_1,\ldots,\dd w_m.
\end{equation}
Its dual is \(\TLog\). We use the standard logarithmic notation; see \cite{DeligneRegular,Saito}.

\begin{definition}\label{def:strict-map}
A holomorphic map \(f:X\to X\) is a strict logarithmic self-map of \((X,D)\) if, for every irreducible component \(D_i\) of \(D\), one has an equality of ideal sheaves
\[
f^{-1}\cI_{D_i}\cdot\cO_X=\cI_{D_i}.
\]
Equivalently, \(f^{-1}(D_i)=D_i\) as effective Cartier divisors, and in every adapted chart one has
\begin{equation}\label{eq:strict-local}
f^*z_i=z_i a_i(z,w),
\qquad
a_i\in\cO_X^{\times}.
\end{equation}
\end{definition}

A strict self-map preserves \(U=X\setminus D\) and induces a bundle morphism
\begin{equation}\label{eq:log-differential}
\dd f_{\log}^*:f^*\OmegaLog\longrightarrow\OmegaLog.
\end{equation}
Indeed,
\begin{equation}\label{eq:dlog-pullback}
f^*\left(\frac{\dd z_i}{z_i}\right)
=
\frac{\dd z_i}{z_i}+\frac{\dd a_i}{a_i}.
\end{equation}
Let \(Z_f\) denote the fixed-point scheme. Restriction to \(Z_f\) identifies the source and target of \eqref{eq:log-differential}, and gives an endomorphism
\begin{equation}\label{eq:Jlog}
J_f^{\log}:\OmegaLog|_{Z_f}\longrightarrow\OmegaLog|_{Z_f}.
\end{equation}
Its characteristic polynomial has coefficients in \(\cO_{Z_f}\).
Let \(p\in D\) lie on precisely \(k\) local components and let
\[
S_p=\{z_1=\cdots=z_k=0\}
\]
be the corresponding closed stratum germ. In adapted coordinates, write
\begin{equation}\label{eq:local-map-components}
f^*z_i=z_i a_i(z,w),
\qquad
f^*w_\alpha=F_\alpha(z,w).
\end{equation}
The restriction \(f_{S_p}\) is given by \(w\mapsto F(0,w)\). The normal linearisation
\[
f_{S_p}^*N_{S_p/X}\longrightarrow N_{S_p/X}
\]
is diagonal in the local frame determined by the \(z_i\), with multipliers \(a_i(0,w)\).

\begin{lemma}\label{lem:block-log-jacobian}
After restriction to the fixed-point scheme of \(f_{S_p}\), the logarithmic cotangent endomorphism is block triangular with respect to the decomposition determined by the frame \eqref{eq:log-frame}. Up to transposition, according to the row or column convention for matrices of bundle morphisms, it has the form
\[
J_f^{\log}|_{S_p}
=
\begin{pmatrix}
I_k & B\\
0 & J_{f_{S_p}}
\end{pmatrix}
\]
for a holomorphic matrix \(B\), where \(J_{f_{S_p}}\) is the cotangent differential of the restricted map. Consequently, in the local fixed-point algebra of \(f_{S_p}\),
\begin{equation}\label{eq:block-determinant}
\det(1+yJ_f^{\log})|_{S_p}
=
(1+y)^k\det(1+yJ_{f_{S_p}}).
\end{equation}
\end{lemma}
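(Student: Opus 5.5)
We compute the pullback of each element of the frame \eqref{eq:log-frame} directly in adapted coordinates and then restrict. By \eqref{eq:dlog-pullback},
\[
f^*\Bigl(\frac{\dd z_i}{z_i}\Bigr)=\frac{\dd z_i}{z_i}+\sum_{j}\frac{\partial_{z_j}a_i}{a_i}\,\dd z_j+\sum_\alpha\frac{\partial_{w_\alpha}a_i}{a_i}\,\dd w_\alpha ,
\]
and \(\dd z_j=z_j\cdot\frac{\dd z_j}{z_j}\). Thus the coefficient of \(\frac{\dd z_j}{z_j}\) is \(\delta_{ij}+z_j\,\partial_{z_j}a_i/a_i\). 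Similarly,
\[
f^*\dd w_\alpha=\sum_j z_j\,\partial_{z_j}F_\alpha\,\frac{\dd z_j}{z_j}+\sum_\beta\partial_{w_\beta}F_\alpha\,\dd w_\beta .
\]
All these coefficients are holomorphic, which recovers that \(\dd f_{\log}^*\) is a bundle map.

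Next we restrict to \(S_p\), that is, set \(z=0\). The \(\frac{\dd z}{z}\to\frac{\dd z}{z}\) block becomes \(I_k\). The \(\dd w\to\frac{\dd z}{z}\) block vanishes because every entry carries a factor \(z_j\). The \(\frac{\dd z}{z}\to\dd w\) block is \(B=(\partial_{w_\alpha}a_i/a_i)|_{z=0}\), which is holomorphic since \(a_i\) is a unit. The \(\dd w\to\dd w\) block is \(\partial_{w_\beta}F_\alpha(0,w)\), the Jacobian of \(f_{S_p}\), i.e.\ its cotangent differential. Whether \(B\) sits above or below the diagonal depends only on the row/column convention, which accounts for the phrase ``up to transposition''.

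The one point needing care is the identification of source and target. Over \(Z_f\) this uses \(f(x)=x\); on \(S_p\) we restrict to the fixed-point scheme of \(f_{S_p}\), defined by \(w_\alpha-F_\alpha(0,w)\). That scheme lies in \(S_p\), where the frame \eqref{eq:log-frame} is defined, and \(f\) preserves \(S_p\) by strictness (\(f^*z_i\in(z_i)\)). Hence on it \(f^*\) of each frame element is re-expressed in the same frame, and the matrix entries are to be read as classes in \(\cO_{S_p}/(w-F(0,w))\). The same argument as in \Cref{lem:trace-class} shows that a change of adapted frame acts by conjugation there, so the block shape is preserved up to the triangular conjugations allowed by adapted coordinate changes.

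Finally, \eqref{eq:block-determinant} follows because the determinant of a block triangular matrix is the product of the determinants of its diagonal blocks. This gives
\[
\det(1+yJ_f^{\log})|_{S_p}=\det\bigl((1+y)I_k\bigr)\det(1+yJ_{f_{S_p}})=(1+y)^k\det(1+yJ_{f_{S_p}}).
\]
Transposition does not change the determinant, and the identity holds in the fixed-point algebra of \(f_{S_p}\). The only real obstacle is the bookkeeping in the identification step above; the computation itself is mechanical.
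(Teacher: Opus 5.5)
Your proposal is correct and takes the same route as the paper. You expand the pullbacks of the frame \eqref{eq:log-frame} via \eqref{eq:dlog-pullback}, note that on $S_p$ the normal block becomes $I_k$ and the tangential-to-normal block vanishes because every entry carries a factor $z_j$, identify the remaining block with the Jacobian of $w\mapsto F(0,w)$, and take the determinant of a block triangular matrix.

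Your extra paragraph on identifying source and target is a harmless elaboration of a point the paper leaves implicit. It could be shortened by observing that $I_{f,p}\subset(z_1,\ldots,z_k,\,w_1-F_1(0,w),\ldots,w_m-F_m(0,w))$. The fixed-point scheme of $f_{S_p}$ therefore lies in $Z_f$, and the identification in \eqref{eq:Jlog} simply restricts to it.
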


\begin{proof}
Equation \eqref{eq:dlog-pullback} shows that the coefficient of \(\dd z_j/z_j\) in \(\dd a_i/a_i\) is divisible by \(z_j\) and hence vanishes on \(S_p\). Thus the normal logarithmic block is the identity. Moreover,
\[
\dd F_\alpha
=
\sum_j\frac{\partial F_\alpha}{\partial z_j}z_j\frac{\dd z_j}{z_j}
+
\sum_\beta\frac{\partial F_\alpha}{\partial w_\beta}\dd w_\beta,
\]
so the normal logarithmic part of \(\dd F_\alpha\) also vanishes on \(S_p\). The remaining tangential block is the cotangent differential of \(f_{S_p}\). The determinant identity follows.
\end{proof}

\section{The logarithmic \texorpdfstring{$y$}{y}-Lefschetz formula}\label{sec:y-formula}

Let \((E,\Phi)\) be an \(f\)-linearised holomorphic vector bundle. For every \(r\), the tensor product
\[
E\otimes\Omega_X^r(\log D)
\]
has the induced linearisation
\begin{equation}\label{eq:induced-linearisation}
f^*E\otimes f^*\Omega_X^r(\log D)
\xrightarrow{\ \Phi\otimes\wedge^r\dd f_{\log}^*\ }
E\otimes\Omega_X^r(\log D).
\end{equation}

\begin{definition}\label{def:y-lefschetz}
Let \(\Phi_r\) denote the morphism in \eqref{eq:induced-linearisation}. The logarithmic \(y\)-Lefschetz polynomial is
\begin{equation}\label{eq:y-lefschetz-definition}
L_y^{\log}(f,E,\Phi;D)
=
\sum_{r=0}^{n}y^r
\sum_{q\geq0}(-1)^q
\Tr\bigl(H^q(\Phi_r)\circ f^*\bigr).
\end{equation}
\end{definition}

At an isolated fixed point \(p\), let \(\tau_{\Phi,p}\) be the class \eqref{eq:trace-class}. The trace of the tensor-product linearisation on the fixed-point algebra is
\[
\tau_{\Phi,p}\Tr(\wedge^rJ_f^{\log}).
\]
The elementary identity
\begin{equation}\label{eq:exterior-determinant}
\sum_{r=0}^{n}y^r\Tr(\wedge^rA)=\det(1+yA)
\end{equation}
gives the polynomial formula.

\begin{theorem}\label{thm:y-lefschetz}
Let \((X,D)\) be a compact smooth pair with simple normal crossings boundary, let \(f:(X,D)\to(X,D)\) be a strict logarithmic self-map with finite fixed-point set, and let \((E,\Phi)\) be an \(f\)-linearised holomorphic vector bundle. Then
\begin{equation}\label{eq:y-lefschetz-residue}
L_y^{\log}(f,E,\Phi;D)
=
\sum_{p\in\Fix(f)}
\Res_p
\left[
\frac{
\tau_{\Phi,p}\det(1+yJ_f^{\log})\,
\dd x_1\wedge\cdots\wedge\dd x_n
}{x_1-f_1,\ldots,x_n-f_n}
\right].
\end{equation}
\end{theorem}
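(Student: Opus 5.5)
The plan is to apply \Cref{thm:classical-lefschetz} separately to each coefficient of \(y^r\) in \eqref{eq:y-lefschetz-definition}, and then reassemble the local terms with the identity \eqref{eq:exterior-determinant}.

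Fix \(r\) and consider the bundle \(E_r=E\otimes\Omega_X^r(\log D)\) with the linearisation \(\Phi_r\) of \eqref{eq:induced-linearisation}. Since \(\Omega_X^r(\log D)=\wedge^r\OmegaLog\) is locally free and \(f\) is strict, \(\dd f_{\log}^*\) is a genuine bundle morphism \(f^*\OmegaLog\to\OmegaLog\), as recorded in \eqref{eq:log-differential}. Its \(r\)-th exterior power is therefore a morphism of holomorphic bundles, and \(\Phi_r\) is a holomorphic linearisation of \(E_r\) in the sense of \Cref{sec:local-duality}. As \(X\) is compact and \(\Fix(f)\) is finite, \Cref{thm:classical-lefschetz} applies and gives
\[
\sum_{q\geq0}(-1)^q\Tr\bigl(H^q(\Phi_r)\circ f^*\bigr)=\sum_{p\in\Fix(f)}\Res_p\left[\frac{\tau_{\Phi_r,p}\,\dd x_1\wedge\cdots\wedge\dd x_n}{x_1-f_1,\ldots,x_n-f_n}\right].
\]

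The next step is to identify the trace class \(\tau_{\Phi_r,p}\in A_{f,p}\). Choose a local frame \(e_1,\ldots,e_s\) of \(E\) near \(p\), and take the logarithmic frame \eqref{eq:log-frame} when \(p\in D\), or an ordinary coframe when \(p\notin D\). Their tensor products with the wedge monomials give a frame of \(E_r\). In this frame the matrix of \(\Phi_r\) is the Kronecker product \(M(x)\otimes\wedge^r J(x)\), where \(J(x)\) is the matrix of \(\dd f_{\log}^*\) written in the source frame pulled back by \(f\) and the target frame. Reduced to \(A_{f,p}\), the pulled-back frame agrees with the original one, as in the proof of \Cref{lem:trace-class}. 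So \(J\) reduces to \(J_f^{\log}\), and
\[
\tau_{\Phi_r,p}=[\Tr M]\cdot[\Tr\wedge^rJ_f^{\log}]=\tau_{\Phi,p}\Tr(\wedge^rJ_f^{\log})
\]
in \(A_{f,p}\), using multiplicativity of the trace on tensor products. By \Cref{lem:trace-class} this class does not depend on the frames chosen.

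Finally, multiply the displayed identity by \(y^r\) and sum over \(r\). Since \(\lambda_g\) of \eqref{eq:residue-functional} is \(\C\)-linear, the sum can be moved inside the residue. The identity \eqref{eq:exterior-determinant}, which holds over the commutative ring \(A_{f,p}\), then turns the numerator into \(\tau_{\Phi,p}\det(1+yJ_f^{\log})\), and this gives \eqref{eq:y-lefschetz-residue}.

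The only point needing care is the second step. One must check that restricting \(\dd f^*_{\log}\) to the fixed-point scheme really identifies source and target as a single endomorphism, and that its class in \(A_{f,p}\) is well defined. The argument of \Cref{lem:trace-class} covers this: \(G(f(x))\equiv G(x)\) modulo \(I_{f,p}\). Everything else is formal linearity of the residue functional.
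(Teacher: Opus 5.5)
Your proposal is correct and matches the paper's proof. You apply \Cref{thm:classical-lefschetz} to each \(E\otimes\Omega_X^r(\log D)\) with the linearisation \(\Phi\otimes\wedge^r\dd f_{\log}^*\), identify the local numerator as \(\tau_{\Phi,p}\Tr(\wedge^rJ_f^{\log})\), and sum against \(y^r\) using \(\sum_r y^r\Tr(\wedge^rA)=\det(1+yA)\); your Kronecker-product frame computation simply spells out the trace identification that the paper states in one line.
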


\begin{proof}
Apply \Cref{thm:classical-lefschetz} to the bundle \(E\otimes\Omega_X^r(\log D)\) for each \(r\), multiply by \(y^r\), and sum. The local numerator is the trace of the tensor product in \eqref{eq:induced-linearisation}. Formula \eqref{eq:exterior-determinant} gives \eqref{eq:y-lefschetz-residue}.
\end{proof}

Over the interior \(U=X\setminus D\), the logarithmic cotangent endomorphism is the ordinary cotangent endomorphism. We denote its restriction to the local fixed-point algebra by
\[
J_f:\Omega_X^1|_{Z_f\cap U}\longrightarrow\Omega_X^1|_{Z_f\cap U}.
\]

\begin{definition}\label{def:interior-boundary-indices}
For \(p\in\Fix(f)\setminus D\), define
\begin{equation}\label{eq:interior-index}
\Ind_p^{\mathrm{int}}(f,E,\Phi;y)
=
\lambda_{f,p}
\left(
\tau_{\Phi,p}\det(1+yJ_f)
\right).
\end{equation}
For \(p\in\Fix(f)\cap D\), define
\begin{equation}\label{eq:boundary-index}
\Ind_{p,D}^{\mathrm{bdry}}(f,E,\Phi;y)
=
\lambda_{f,p}
\left(
\tau_{\Phi,p}\det(1+yJ_f^{\log})
\right).
\end{equation}
\end{definition}

These numbers are intrinsic: both the trace class and the cotangent endomorphism are defined on the local fixed-point algebra.

\begin{corollary}\label{cor:interior-boundary-splitting}
Under the hypotheses of \Cref{thm:y-lefschetz},
\begin{equation}\label{eq:interior-boundary-splitting}
L_y^{\log}(f,E,\Phi;D)
=
\sum_{p\in\Fix(f)\setminus D}
\Ind_p^{\mathrm{int}}(f,E,\Phi;y)
+
\sum_{p\in\Fix(f)\cap D}
\Ind_{p,D}^{\mathrm{bdry}}(f,E,\Phi;y).
\end{equation}
\end{corollary}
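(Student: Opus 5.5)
The corollary follows from \Cref{thm:y-lefschetz} by sorting its sum over fixed points and identifying each local residue with an index from \Cref{def:interior-boundary-indices}. Since \(\Fix(f)\) is finite, split it as the disjoint union of \(\Fix(f)\setminus D\) and \(\Fix(f)\cap D\) and rewrite the right-hand side of \eqref{eq:y-lefschetz-residue} as the corresponding two sums. It then suffices to show that the residue at each \(p\) equals the appropriate index.

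For \(p\in\Fix(f)\cap D\), this is the definition. By \eqref{eq:residue-functional}, the local residue is \(\lambda_{f,p}\) applied to the class of \(\tau_{\Phi,p}\det(1+yJ_f^{\log})\) in \(A_{f,p}[y]\), with \(\lambda_{f,p}\) extended \(y\)-linearly coefficientwise. This is exactly \eqref{eq:boundary-index}.

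For \(p\in\Fix(f)\setminus D\), choose a neighbourhood \(V\) of \(p\) disjoint from \(D\). This is possible because \(D\) is closed. On \(V\) we have \(\OmegaLog|_V=\Omega_X^1|_V\), and \(\dd f_{\log}^*\) coincides with the ordinary pullback of forms; formula \eqref{eq:dlog-pullback} plays no role off \(D\). Restricting to the germ of \(Z_f\) at \(p\) therefore gives \(J_f^{\log}=J_f\) as endomorphisms over \(A_{f,p}\). Hence \(\det(1+yJ_f^{\log})=\det(1+yJ_f)\) in \(A_{f,p}[y]\), and the residue equals \eqref{eq:interior-index}.

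The only point needing care is that the numerator is a well-defined element of \(A_{f,p}[y]\), so that applying \(\lambda_{f,p}\) makes sense. This follows from \Cref{lem:trace-class} together with the intrinsic nature of \(J_f^{\log}\) over \(Z_f\). Coordinate independence follows from \Cref{lem:transformation-law}, exactly as in the proof of \Cref{thm:classical-lefschetz}. No real obstacle remains: the argument is bookkeeping, and summing the two families of identities gives \eqref{eq:interior-boundary-splitting}.
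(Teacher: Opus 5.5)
Your proposal is correct and follows the paper's own route. The paper gives no separate proof: it splits the sum in \Cref{thm:y-lefschetz} into interior and boundary fixed points, reads each local residue as \(\lambda_{f,p}\) applied to the numerator, and relies on its remark that \(J_f^{\log}=J_f\) over \(U=X\setminus D\), which is exactly your argument.
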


At \(y=0\), \Cref{thm:y-lefschetz} gives the ordinary holomorphic Lefschetz number:
\begin{equation}\label{eq:y-zero}
L_0^{\log}(f,E,\Phi;D)=L(f,E,\Phi).
\end{equation}
The corresponding local term is \eqref{eq:intro-classical-residue}.

\section{Normally non-resonant boundary points}\label{sec:nonresonant}

Let \(p\in D\) lie on \(k\) branches. Use the notation \eqref{eq:local-map-components} and put
\begin{equation}\label{eq:normal-fixed-equations}
b_i=1-a_i,
\qquad
g_i=z_ib_i,
\qquad
q_\alpha=w_\alpha-F_\alpha(z,w).
\end{equation}
The fixed equations on the stratum are
\begin{equation}\label{eq:stratum-fixed-equations}
h_\alpha(w)=w_\alpha-F_\alpha(0,w).
\end{equation}
We write
\[
\Omega_{S_p}=\dd w_1\wedge\cdots\wedge\dd w_m.
\]
When the normal factors are units, the local fixed-point algebra is canonically identified with
\[
\cO_{S_p,p}/(h_1,\ldots,h_m),
\]
and \(\tau_{\Phi|S_p}\) denotes the image of \(\tau_{\Phi,p}\) under this identification.
The normal linearisation has multipliers \(a_i(0,w)\). Its Lefschetz determinant along the fixed-point scheme of \(f_{S_p}\) is therefore
\begin{equation}\label{eq:normal-determinant}
\det(1-\dd_Nf)=\prod_{i=1}^k b_i(0,w).
\end{equation}

\begin{definition}\label{def:normal-nonresonance}
The fixed point \(p\) is normally non-resonant if
\[
\det(1-\dd_Nf_p)\neq0.
\]
\end{definition}

Under this condition, the germs \(b_i\) are units. Since the full fixed-point ideal is primary, the equations \(h_1,\ldots,h_m\) have an isolated common zero on \(S_p\).

\begin{theorem}\label{thm:nonresonant-reduction}
Let \(p\in D\) be a normally non-resonant isolated fixed point. Then its local polynomial in \Cref{thm:y-lefschetz} is
\begin{align}\label{eq:nonresonant-y-reduction}
\Ind_{p,D}^{\mathrm{bdry}}(f,E,\Phi;y)
&=
(1+y)^k
\Res_p^{S_p}
\left[
\frac{
\tau_{\Phi|S_p}
\det(1+yJ_{f_{S_p}})
\det(1-\dd_Nf)^{-1}\,\Omega_{S_p}
}{h_1,\ldots,h_m}
\right].
\end{align}
\end{theorem}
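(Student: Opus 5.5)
We compute the local term of \Cref{thm:y-lefschetz} at \(p\) in the adapted coordinates \((z,w)\). By \Cref{def:interior-boundary-indices}, this term is the residue of \(\tau_{\Phi,p}\det(1+yJ_f^{\log})\,\dd z\wedge\dd w\) against the fixed-point equations \(g_i=z_ib_i\) and \(q_\alpha=w_\alpha-F_\alpha(z,w)\) of \eqref{eq:normal-fixed-equations}. The coordinate fixed-point sequence is exactly \((z_1b_1,\ldots,z_kb_k,q_1,\ldots,q_m)\), so no reordering sign arises. Normal non-resonance means \(\prod_i b_i(0,0)\neq0\), so each \(b_i\) is a unit in \(\cO_{X,p}\).

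The first step removes the units. We apply \Cref{cor:units} with \(u_i=b_i\) for the normal equations and \(u=1\) for the tangential ones. This rewrites the local term as
\[
\Res_p\left[\frac{\tau_{\Phi,p}\det(1+yJ_f^{\log})(b_1\cdots b_k)^{-1}\,\dd z\wedge\dd w}{z_1,\ldots,z_k,q_1,\ldots,q_m}\right].
\]
The second step reduces to the stratum. The \(h_\alpha=q_\alpha(0,w)\) have an isolated common zero on \(S_p\), as already observed after \Cref{def:normal-nonresonance}. Hence \Cref{lem:iterated-reduction} applies with \(H=\tau\det(1+yJ_f^{\log})\prod b_i^{-1}\), and the term becomes the residue on \(S_p\) of \(H(0,w)\,\Omega_{S_p}\) against \(h_1,\ldots,h_m\).

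The third step identifies \(H(0,w)\) in the algebra \(\cO_{S_p,p}/(h)\). Since \((z,q)\) and \((z,h)\) generate the same ideal, restriction \(z=0\) gives the canonical isomorphism \(A_{f,p}\cong\cO_{S_p,p}/(h)\) used to define \(\tau_{\Phi|S_p}\). The residue on \(S_p\) depends only on the class of the numerator there, so we may compute factor by factor. The factor \(\prod b_i(0,w)^{-1}\) is \(\det(1-\dd_Nf)^{-1}\) by \eqref{eq:normal-determinant}. The trace factor restricts to \(\tau_{\Phi|S_p}\) by definition. For the cotangent factor we invoke \Cref{lem:block-log-jacobian}: modulo \((z,h)\), its identity \eqref{eq:block-determinant} gives \(\det(1+yJ_f^{\log})|_{S_p}=(1+y)^k\det(1+yJ_{f_{S_p}})\). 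Pulling \((1+y)^k\) outside the residue yields \eqref{eq:nonresonant-y-reduction}.

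The main point needing care is this last identification. The lemma is stated "after restriction to the fixed-point scheme of \(f_{S_p}\)", and we must check that this agrees with the class of \(\det(1+yJ_f^{\log})\) in \(A_{f,p}\) under the isomorphism. I would verify it directly: the entries of \(J_f^{\log}\) are functions on the germ, and setting \(z=0\) kills exactly the off-block normal entries. These are the terms \(z_j\partial_{z_j}\log a_i\) and \(z_j\partial_{z_j}F_\alpha\), as in the proof of the lemma. Since the isomorphism is induced by restriction to \(z=0\), this justifies the step. The block-triangular shape makes the determinant independent of \(B\) and of the transposition convention.
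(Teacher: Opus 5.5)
Your proposal is correct and follows the paper's proof step for step: transfer the units \(b_i\) to the numerator via \Cref{cor:units}, reduce to the stratum with \Cref{lem:iterated-reduction}, and identify the restricted numerator using \eqref{eq:block-determinant} and \eqref{eq:normal-determinant}. Your extra check is a useful detail that the paper leaves implicit: setting \(z=0\) in the matrix of \(J_f^{\log}\) already produces the block-triangular form, so the identification holds as functions of \(w\) and not only modulo \((h)\).
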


\begin{proof}
The local term in \eqref{eq:y-lefschetz-residue} is
\[
\Res_p
\left[
\frac{
\tau_{\Phi,p}\det(1+yJ_f^{\log})\,
\dd z_1\wedge\cdots\wedge\dd z_k\wedge\Omega_{S_p}
}{z_1b_1,\ldots,z_kb_k,q_1,\ldots,q_m}
\right].
\]
By \Cref{cor:units}, the factors \(b_i\) may be transferred to the numerator as \((b_1\cdots b_k)^{-1}\). Apply \Cref{lem:iterated-reduction}. Restriction to \(S_p\), together with \eqref{eq:block-determinant} and \eqref{eq:normal-determinant}, gives \eqref{eq:nonresonant-y-reduction}.
\end{proof}

\begin{corollary}\label{cor:y-zero-nonresonant}
At \(y=0\), one has
\begin{equation}\label{eq:y-zero-stratum}
\LT_p(f,E,\Phi)
=
\Res_p^{S_p}
\left[
\frac{
\tau_{\Phi|S_p}\det(1-\dd_Nf)^{-1}\,\Omega_{S_p}
}{h_1,\ldots,h_m}
\right].
\end{equation}
\end{corollary}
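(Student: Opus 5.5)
The plan is to specialise \Cref{thm:nonresonant-reduction} at \(y=0\) and identify the left-hand side with the classical local term. First I will set \(y=0\) in the right-hand side of \eqref{eq:nonresonant-y-reduction}. The factor \((1+y)^k\) becomes \(1\), and \(\det(1+yJ_{f_{S_p}})\) becomes \(1\) in the local fixed-point algebra of \(f_{S_p}\). What remains is exactly the residue on the right of \eqref{eq:y-zero-stratum}. Setting \(y=0\) commutes with the residue, since \(\lambda_{f,p}\) is \(\C\)-linear and both sides are polynomials in \(y\) with coefficients in the local algebra.

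Next I will identify the left-hand side at \(y=0\). By \Cref{def:interior-boundary-indices},
\[
\Ind_{p,D}^{\mathrm{bdry}}(f,E,\Phi;0)=\lambda_{f,p}(\tau_{\Phi,p}),
\]
because \(\det(1+0\cdot J_f^{\log})=1\). By \eqref{eq:residue-functional} and \Cref{thm:classical-lefschetz}, this is the residue \eqref{eq:intro-classical-residue}, which is \(\LT_p(f,E,\Phi)\). The residue is computed in the adapted coordinates \((z,w)\) with the coordinate fixed-point equations. This is legitimate because the local term is coordinate independent, by the transformation law \Cref{lem:transformation-law}.

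The only point needing care is that \(\tau_{\Phi|S_p}\) and \(\det(1-\dd_Nf)^{-1}\) are well defined as elements of \(\cO_{S_p,p}/(h_1,\ldots,h_m)\). For \(\tau_{\Phi|S_p}\), normal non-resonance makes the \(b_i\) units, so \(A_{f,p}\simeq\cO_{S_p,p}/(h)\), as stated before \Cref{def:normal-nonresonance}. For \(\det(1-\dd_Nf)^{-1}\), the product \(\prod_i b_i(0,w)\) is a unit at \(p\). Both facts are already used in the proof of the theorem, so there is no real obstacle: the corollary is a direct evaluation of \eqref{eq:nonresonant-y-reduction} at \(y=0\).

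If one prefers not to pass through the \(y\)-polynomial, the argument can be rerun directly. Apply \Cref{cor:units} to move the \(b_i\) into the numerator, then apply \Cref{lem:iterated-reduction} with \(H=\tau_{\Phi,p}(b_1\cdots b_k)^{-1}\). This gives the same formula at once, since \(H(0,w)\) is the numerator in \eqref{eq:y-zero-stratum}.
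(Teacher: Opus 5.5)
Your proposal is correct and is essentially the paper's argument: the corollary is obtained by setting \(y=0\) in \Cref{thm:nonresonant-reduction}, and \(\Ind_{p,D}^{\mathrm{bdry}}(f,E,\Phi;0)=\lambda_{f,p}(\tau_{\Phi,p})\) is identified with the classical local term, as recorded around \eqref{eq:y-zero}. Your direct alternative via \Cref{cor:units} and \Cref{lem:iterated-reduction} is just the proof of that theorem with \(\det(1+yJ_f^{\log})\) replaced by \(1\), so the two routes coincide.
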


\begin{corollary}\label{cor:boundary-vanishing}
If \(p\in D\) is normally non-resonant, then
\[
\Ind_{p,D}^{\mathrm{bdry}}(f,E,\Phi;-1)=0.
\]
More precisely, the local polynomial is divisible by \((1+y)^k\).
\end{corollary}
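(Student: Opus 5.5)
The plan is to read this corollary off \Cref{thm:nonresonant-reduction}; no new residue computation should be needed. At a normally non-resonant boundary point \(p\), that theorem expresses \(\Ind_{p,D}^{\mathrm{bdry}}(f,E,\Phi;y)\) as \((1+y)^k\) times a residue on the stratum. So the argument comes down to showing that this residue factor is a polynomial in \(y\) with complex coefficients.

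First, I will check the polynomial structure of the factor
\[
P_p(y)=\Res_p^{S_p}
\left[
\frac{
\tau_{\Phi|S_p}
\det(1+yJ_{f_{S_p}})
\det(1-\dd_Nf)^{-1}\,\Omega_{S_p}
}{h_1,\ldots,h_m}
\right].
\]
Expand \(\det(1+yJ_{f_{S_p}})=\sum_{r=0}^{m}y^r\Tr(\wedge^rJ_{f_{S_p}})\) by \eqref{eq:exterior-determinant}. Each coefficient is a class in the local fixed-point algebra \(\cO_{S_p,p}/(h_1,\ldots,h_m)\). The factor \(\det(1-\dd_Nf)^{-1}=\prod_i b_i(0,w)^{-1}\) is a well-defined element of this algebra, because normal non-resonance makes each \(b_i\) a unit. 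Since the residue functional is \(\C\)-linear, \(P_p(y)=\sum_r y^r c_r\) with \(c_r\in\C\), so \(P_p(y)\in\C[y]\) has degree at most \(m\).

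Next, \Cref{thm:nonresonant-reduction} gives the identity \(\Ind_{p,D}^{\mathrm{bdry}}(f,E,\Phi;y)=(1+y)^kP_p(y)\) in \(\C[y]\), which is exactly the divisibility claim. Since \(p\in D\), we have \(k\geq1\). Evaluating at \(y=-1\) then kills the factor \((1+y)^k\), and the value \(P_p(-1)\) is finite because \(P_p\) is a polynomial, so \(\Ind_{p,D}^{\mathrm{bdry}}(f,E,\Phi;-1)=0\).

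The only point that needs care is that the identity of \Cref{thm:nonresonant-reduction} holds as polynomials in \(y\), not merely pointwise. Its proof used linearity of the residue together with \eqref{eq:block-determinant}, which is an identity of polynomials with coefficients in the fixed-point algebra. Comparing coefficients of \(y^r\) on both sides therefore settles it.
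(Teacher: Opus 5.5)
Your proposal is correct and follows the paper, which states this corollary without separate proof as an immediate consequence of \Cref{thm:nonresonant-reduction}. There the local polynomial is \((1+y)^k\) times a stratum residue, and that residue is a polynomial in \(y\). Since \(k\geq1\) for \(p\in D\), both divisibility and vanishing at \(y=-1\) follow; your extra check that the residue factor lies in \(\C[y]\) just makes this explicit.
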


\begin{corollary}\label{cor:nondegenerate-formula}
If, in addition, \(p\) is non-degenerate for \(f_{S_p}\), then
\[
\Ind_{p,D}^{\mathrm{bdry}}(f,E,\Phi;y)
=
(1+y)^k
\frac{
\Tr(\Phi_p)\det(1+y\dd f_{S_p,p}^*)
}{
\det(1-\dd_Nf_p)\det(1-\dd f_{S_p,p})
}.
\]
\end{corollary}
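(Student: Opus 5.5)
The plan is to start from \Cref{thm:nonresonant-reduction} and evaluate the stratum residue in \eqref{eq:nonresonant-y-reduction} under the extra hypothesis. That \(p\) is non-degenerate for \(f_{S_p}\) means \(\det(1-\dd f_{S_p,p})\neq0\). The Jacobian matrix of \(h_\alpha(w)=w_\alpha-F_\alpha(0,w)\) at \(p\) is \(1-\dd f_{S_p,p}\), so \(h_1,\ldots,h_m\) form a coordinate system on \(S_p\) near \(p\). Hence the local fixed-point algebra \(\cO_{S_p,p}/(h_1,\ldots,h_m)\) is \(\C\), and every class in it is determined by its value at \(p\).

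First I evaluate the numerator at \(p\). The class \(\tau_{\Phi|S_p}\) is the image of \(\tau_{\Phi,p}=[\Tr M]\), so its value is \(\Tr M(p)=\Tr(\Phi_p)\). By \eqref{eq:normal-determinant}, the class \(\det(1-\dd_Nf)^{-1}\) takes the value \(\prod_i b_i(0,p)^{-1}=\det(1-\dd_Nf_p)^{-1}\). The class \(\det(1+yJ_{f_{S_p}})\) takes the value \(\det(1+y\,\dd f_{S_p,p}^*)\), since \(J_{f_{S_p}}\) is the cotangent differential of \(f_{S_p}\) restricted to the reduced point. Here I use that evaluation at \(p\) is the quotient map to \(\C\), and that this map is a ring homomorphism commuting with determinants and traces.

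Next I compute the residue
\[
\Res_p^{S_p}\left[\frac{c\,\Omega_{S_p}}{h_1,\ldots,h_m}\right]
\]
for a constant \(c\). Write \(w_\beta\) in terms of the \(h_\alpha\); since the \(h_\alpha\) vanish at \(p\) and are local coordinates, \(h_\alpha=\sum_\beta a_{\alpha\beta}w_\beta\) with \(\det(a_{\alpha\beta})(p)=\det(1-\dd f_{S_p,p})\). By \Cref{lem:transformation-law}, applied with the \(w_\beta\) as the old system and the \(h_\alpha\) as the new one, the residue over \((w_1,\ldots,w_m)\) of \(c\,\Omega_{S_p}\) equals the residue over \((h)\) of \(c\det(a)\,\Omega_{S_p}\). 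Replacing \(c\) by \(c\det(a)^{-1}\), the residue over \((h)\) equals the residue over \((w)\) of \(c\det(a)^{-1}\Omega_{S_p}\). That equals \(c\det(a)^{-1}(p)\) by \Cref{lem:iterated-reduction} with \(m\) coordinate functions, or equivalently iterated Cauchy. This is the standard identity \(\Res[\Omega/h]=1/\det(\partial h/\partial w)(p)\).

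Multiplying by \((1+y)^k\) gives the formula. There is no real obstacle; the only care is to justify that residues depend only on classes in the algebra, which is \(\C\) here, so numerator factors may be replaced by their values at \(p\). It also follows from the transformation law, since the kernel of evaluation at \(p\) is the ideal \((h)\) and any element of that ideal has zero residue.
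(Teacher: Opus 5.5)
Your proposal is correct. It follows the derivation the paper intends, though the paper states this corollary without a written proof: specialise \Cref{thm:nonresonant-reduction}, note that non-degeneracy makes the stratum fixed-point algebra equal to \(\C\) so every numerator factor may be replaced by its value at \(p\), and evaluate the stratum residue by the standard identity \(\Res_p^{S_p}[c\,\Omega_{S_p}/(h_1,\ldots,h_m)]=c/\det(1-\dd f_{S_p,p})\), which you derive correctly from the transformation law.
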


\section{Normal-cone specialisation}\label{sec:normal-cone}

Before performing the normal rescaling, we allow an arbitrary regular-sequence presentation of the complete fixed-point ideal. Let
\[
R=\C\{z_1,\ldots,z_k,w_1,\ldots,w_m\},
\qquad n=k+m,
\]
and write
\[
\dd z=\dd z_1\wedge\cdots\wedge\dd z_k,
\qquad
\dd w=\dd w_1\wedge\cdots\wedge\dd w_m.
\]
For a non-zero germ \(h\in R\), write
\[
h(z,w)=\sum_{\alpha\in\mathbb N^k}h_\alpha(w)z^\alpha
\]
and define
\begin{equation}\label{eq:normal-order}
\nu_D(h)=\min\{|\alpha|:h_\alpha\neq0\}.
\end{equation}
Its normal initial form is
\begin{equation}\label{eq:normal-initial-form}
\operatorname{in}_D(h)
=
\sum_{|\alpha|=\nu_D(h)}h_\alpha(w)z^\alpha.
\end{equation}

Let \(g_1,\ldots,g_n\) be the coordinate fixed-point equations, and put \(I=(g_1,\ldots,g_n)\). Since \(I\) is an \(\mathfrak m\)-primary complete-intersection ideal, every regular sequence \(q_1,\ldots,q_n\) generating \(I\) is a minimal generating set. Hence there is a matrix \(A=(a_{ij})\in\operatorname{GL}_n(R)\) such that
\begin{equation}\label{eq:change-fixed-generators}
(q_1,\ldots,q_n)^t=A(g_1,\ldots,g_n)^t.
\end{equation}
Set \(\Delta_q=\det A\in R^\times\).

\begin{definition}\label{def:admissible-presentation}
A regular-sequence presentation \(q=(q_1,\ldots,q_n)\) of \(I\) is normally admissible if
\begin{equation}\label{eq:admissible-colength}
R/\bigl(\operatorname{in}_D(q_1),\ldots,
\operatorname{in}_D(q_n)\bigr)
\end{equation}
has finite length and
\[
\sum_{j=1}^{n}\nu_D(q_j)\geq k.
\]
A boundary fixed point is normally admissible if its fixed-point ideal admits such a presentation in adapted coordinates.
\end{definition}

Finite colength in \eqref{eq:admissible-colength} implies that the \(n\) initial forms constitute a regular sequence. The condition is therefore directly verifiable and is not an additional Cohen--Macaulay hypothesis.
Fix a normally admissible presentation. Put \(d_j=\nu_D(q_j)\) and define
\begin{equation}\label{eq:homogenised-equations}
Q_j(t,z,w)=t^{-d_j}q_j(tz,w).
\end{equation}
Then \(Q_j(0,z,w)=\operatorname{in}_D(q_j)\). Let \(\Delta\) be a small disc, let \(V\) be a sufficiently small polydisc, and let
\begin{equation}\label{eq:relative-algebra}
\cA_q
=
\cO_{\Delta\times V}/(Q_1,\ldots,Q_n).
\end{equation}
Denote the corresponding analytic space by \(\mathcal Z_q\) and the projection by
\[
\pi_q:\mathcal Z_q\longrightarrow\Delta.
\]

\begin{lemma}\label{lem:finite-flat-family}
After shrinking \(\Delta\) and \(V\), the morphism \(\pi_q\) is finite and flat. The support of \(\mathcal Z_q\) is the zero section \(\Delta\times\{0\}\). The functions \(Q_1,\ldots,Q_n\) form a relative regular sequence, and every fibre is a zero-dimensional complete intersection of the same length.
\end{lemma}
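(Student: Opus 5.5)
The plan is to first check that each \(Q_j\) is genuinely holomorphic on \(\Delta\times V\), then settle the fibre over \(t=0\), and then spread everything to nearby \(t\) using the fact that the fibres are zero-dimensional complete intersections.

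\emph{Holomorphy and the special fibre.} Writing \(q_j=\sum_{|\alpha|\geq d_j}q_{j,\alpha}(w)z^\alpha\), we get \(Q_j=\sum_\alpha t^{|\alpha|-d_j}q_{j,\alpha}(w)z^\alpha\). This is a convergent power series in \((t,z,w)\) on a small product neighbourhood, so \(Q_j\in\cO_{\Delta\times V}\) and \(Q_j(0,z,w)=\operatorname{in}_D(q_j)\). By normal admissibility, \(R/(\operatorname{in}_D(q_1),\ldots,\operatorname{in}_D(q_n))\) has finite length. Hence the initial forms have \(0\) as an isolated common zero, and in the regular local ring \(R\) they form a regular sequence. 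Moreover \(\operatorname{in}_D(q_j)\) is weighted homogeneous under \(z\mapsto sz\), so the common zero set of the initial forms is stable under this scaling. Together with isolatedness, this gives that the zero set of the initial forms near \(0\) is just \(\{0\}\).

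\emph{Finiteness and support.} The ideal \((t,Q_1,\ldots,Q_n)\) in \(\cO_{\Delta\times V,(0,0)}\) is primary to the maximal ideal, because modulo \(t\) it is the finite-colength ideal of initial forms. By the analytic Weierstrass preparation theorem (finiteness criterion), \(\cO/(Q)\) is then a finite \(\C\{t\}\)-module. Therefore, after shrinking \(\Delta\) and \(V\), the projection \(\pi_q\) is finite and the zero set of \(Q\) meets each fibre \(\{t\}\times V\) in finitely many points.

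To identify the support, take \(t\neq0\). There \((z,w)\mapsto(tz,w)\) is an isomorphism, and \(Q_j(t,\cdot)=0\) for all \(j\) exactly when \(q_j(tz,w)=0\) for all \(j\), i.e. when \((tz,w)\) lies in the zero set of \(I\), which is \(\{0\}\). So the zero locus is \(\{z=0,w=0\}\) in every fibre, and the support of \(\mathcal Z_q\) is \(\Delta\times\{0\}\). I expect the main care to be needed here: \(V\) must be chosen uniformly in \(t\). Points with \(tz\) small but \(z\) not small are excluded by taking \(|t|<1\) and \(V\) inside the neighbourhood where \(I\) has only the zero \(0\), since \(|tz|\le|z|\).

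\emph{Regular sequence and flatness.} The local ring \(\cO_{\Delta\times V,(0,0)}\) is regular of dimension \(n+1\). The ideal \((t,Q_1,\ldots,Q_n)\) is primary to the maximal ideal, so these \(n+1\) elements form a system of parameters and hence a regular sequence. In a Cohen–Macaulay local ring a system of parameters may be permuted, so \(Q_1,\ldots,Q_n,t\) is also regular. Consequently \(\cA_q\) is Cohen–Macaulay of dimension one, and \(t\) is a nonzerodivisor on it. A finite module over the DVR \(\C\{t\}\) that is \(t\)-torsion-free is free, so \(\pi_q\) is flat.

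For each \(t\), the restrictions \(Q_j(t,\cdot)\) are \(n\) functions in \(n\) variables with an isolated zero, so every fibre is a zero-dimensional complete intersection; this is the relative regularity. Freeness makes the fibre length constant, equal to the rank. Explicitly, the length is \(\dim R/(\operatorname{in}_D q)\) at \(t=0\), and for \(t\neq0\) it equals \(\dim_\C A_{f,p}=\dim R/I\) via the rescaling isomorphism.
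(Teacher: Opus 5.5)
Your proposal is correct and follows essentially the paper's argument. The rescaling \(z\mapsto tz\) confines the support to the zero section, and Cohen--Macaulayness of the one-dimensional quotient makes \(t\) a non-zero-divisor, which gives local freeness over \(\Delta\) and hence constant fibre length. The differences are only technical. You get finiteness from the Weierstrass finiteness criterion for the \(\mathfrak m\)-primary ideal \((t,Q_1,\ldots,Q_n)\), and regularity from the system-of-parameters property at the origin (spread to a neighbourhood by shrinking \(\Delta\)). The paper instead uses a boundary/properness argument and a height count at every point of \(\mathcal Z_q\).
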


\begin{proof}
Choose a polydisc \(V\) such that the original fixed-point ideal has no zero in \(\overline V\setminus\{0\}\), and shrink the parameter disc so that \(|t|<1\). Normal admissibility allows us, after shrinking \(V\) once more, to assume that the initial forms have no common zero in \(\overline V\setminus\{0\}\). Uniform convergence of \(Q_j(t,\cdot)\) to \(Q_j(0,\cdot)\) on \(\partial V\) then excludes common zeroes on the boundary for sufficiently small \(t\). For \(t\neq0\), the equations \(Q_j(t,z,w)=0\) are equivalent to \(q_j(tz,w)=0\). Since \((tz,w)\in V\) and the original ideal has only the zero at the origin, this forces \((z,w)=(0,0)\). The same conclusion follows on the special fibre from finite colength of the initial ideal. Thus the zero scheme is proper with zero-dimensional fibres over a sufficiently small disc, and hence the projection is finite; set-theoretically, its support is the zero section.

At every point of \(\mathcal Z_q\), the ambient local ring is regular of dimension \(n+1\), whereas the quotient by the \(n\) functions \(Q_j\) has dimension one. Their ideal therefore has height \(n\), so the \(Q_j\) form a regular sequence. The quotient is Cohen--Macaulay of dimension one. Since the special fibre is zero-dimensional, the base parameter \(t\) is a parameter in each local ring of \(\mathcal Z_q\); Cohen--Macaulayness implies that it is a non-zero-divisor. Hence \(\cA_q\) is flat over \(\cO_\Delta\). A finite flat analytic algebra is locally free over the base, and all fibres consequently have the same length.
\end{proof}

Write
\begin{equation}\label{eq:relative-dualising-module}
\omega_{\mathcal Z_q/\Delta}
=
\mathcal Ext^n_{\Delta\times V}
\bigl(\cO_{\mathcal Z_q},
\Omega^n_{\Delta\times V/\Delta}\bigr).
\end{equation}
It is an invertible \(\cO_{\mathcal Z_q}\)-module, locally generated by the residue symbol
\[
\left[
\frac{\dd z\wedge\dd w}{Q_1,\ldots,Q_n}
\right].
\]

\begin{theorem}\label{thm:relative-trace}
There is a canonical morphism
\begin{equation}\label{eq:relative-trace}
\Tr_{\pi_q}:
(\pi_q)_*\omega_{\mathcal Z_q/\Delta}
\longrightarrow\cO_\Delta
\end{equation}
with the following properties.

\begin{enumerate}[label=\textup{(\roman*)},leftmargin=2.2em]
\item Finite flat duality gives
\[
(\pi_q)_*\omega_{\mathcal Z_q/\Delta}
\simeq
\mathcal Hom_{\cO_\Delta}
\bigl((\pi_q)_*\cO_{\mathcal Z_q},\cO_\Delta\bigr),
\]
and \(\Tr_{\pi_q}\) corresponds to evaluation at the unit element \(1\in(\pi_q)_*\cO_{\mathcal Z_q}\).

\item For every holomorphic germ \(H(t,z,w)\),
\begin{equation}\label{eq:relative-residue-trace}
\Tr_{\pi_q}
\left(
H
\left[
\frac{\dd z\wedge\dd w}{Q_1,\ldots,Q_n}
\right]
\right)(t)
=
\Res_0
\left[
\frac{H(t,z,w)\,\dd z\wedge\dd w}
{Q_1(t),\ldots,Q_n(t)}
\right].
\end{equation}

\item The formation of both the dualising module and the trace commutes with arbitrary base change. Its restriction to each fibre is the Grothendieck duality trace of that fibre.
\end{enumerate}
\end{theorem}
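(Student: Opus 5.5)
The plan is to reduce everything to finite flat duality for a finite, locally free algebra over the one-dimensional base \(\cO_\Delta\), using \Cref{lem:finite-flat-family}. By that lemma, \(\cB:=(\pi_q)_*\cO_{\mathcal Z_q}\) is a free \(\cO_\Delta\)-module of rank \(N=\length A_{f,p}\). Moreover \(\mathcal Z_q\) is a relative complete intersection cut out by the regular sequence \(Q_1,\ldots,Q_n\) in the smooth \(\Delta\)-space \(\Delta\times V\). The first step is to compute \(\omega_{\mathcal Z_q/\Delta}\) from the Koszul resolution of \(\cO_{\mathcal Z_q}\) by the \(Q_j\). This gives
\[
\mathcal Ext^n(\cO_{\mathcal Z_q},\Omega^n_{\Delta\times V/\Delta})\simeq\Omega^n_{\Delta\times V/\Delta}\otimes\cO_{\mathcal Z_q}\otimes\det(\mathcal I/\mathcal I^2)^{\vee},
\]
which is free of rank one and generated by the symbol \([\dd z\wedge\dd w/Q_1,\ldots,Q_n]\). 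The Koszul complex stays exact after any base change \(\Delta'\to\Delta\), because the \(Q_j\) restrict to regular sequences on fibres, the fibres having the expected dimension zero. Hence formation of \(\omega\) commutes with base change, and its fibre at \(t_0\) is the usual dualising module \(\mathcal Ext^n(\cO_{Z_{t_0}},\Omega^n_V)\).

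Next I will define \(\Tr_{\pi_q}\) directly by formula \eqref{eq:relative-residue-trace}: send \(H[\dd z\wedge\dd w/Q]\) to the function \(t\mapsto\Res_0[H(t)\,\dd z\wedge\dd w/Q(t)]\). To see that this is well defined and holomorphic in \(t\), I will use the integral representation over the real cycle \(\Gamma_t=\{|Q_j(t,\cdot)|=\varepsilon\}\subset V\). By the proof of \Cref{lem:finite-flat-family}, the \(Q_j(t,\cdot)\) have no common zero on \(\partial V\), uniformly for small \(t\). So one may instead integrate over a fixed cycle (a tube homologous to \(\Gamma_t\) in the complement of the polar divisors), and this makes holomorphy in \(t\) and differentiation under the integral immediate. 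Well-definedness means that the value vanishes when \(H\in(Q_1,\ldots,Q_n)\), which is the standard vanishing of the residue on the ideal, applied fibrewise. By construction the fibre at each \(t_0\) is the Grothendieck residue of that fibre, and this gives (ii) and the second half of (iii). Base change for the trace then follows because both sides are computed fibrewise and are compatible with pulling back \(H\).

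For (i), I will use the pairing \(\cB\times\cB\to\cO_\Delta\), \((a,b)\mapsto\Tr_{\pi_q}(ab\,[\dd z\wedge\dd w/Q])\). This induces a map \((\pi_q)_*\omega\to\mathcal Hom_{\cO_\Delta}(\cB,\cO_\Delta)\), namely \(\eta\mapsto(b\mapsto\Tr(b\eta))\), between free \(\cO_\Delta\)-modules of rank \(N\). A morphism of free modules of equal rank over \(\cO_\Delta\) is an isomorphism if it is an isomorphism on every fibre. The fibre at \(t_0\) is the map \(\omega_{Z_{t_0}}\to\operatorname{Hom}_\C(A_{t_0},\C)\) given by local duality for the Artinian complete intersection \(A_{t_0}\), and local duality, i.e.\ nondegeneracy of the residue pairing, says exactly that this is an isomorphism. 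Under this isomorphism \(\eta\) corresponds to \(b\mapsto\Tr(b\eta)\), so \(\Tr(\eta)\) is evaluation at \(1\). This identifies \(\Tr_{\pi_q}\) with the canonical trace of finite flat duality and gives canonicity, independent of the chosen generator of \(\omega\). The transformation law \Cref{lem:transformation-law} shows the definition does not depend on the presentation of the symbol.

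I expect the main obstacle to be the holomorphic dependence on \(t\) together with the uniform choice of integration cycle near \(t=0\), where the fibre degenerates to the initial-form complete intersection. For this I will rely on the uniform non-vanishing on \(\partial V\) from \Cref{lem:finite-flat-family} and use a Bochner--Martinelli or Cauchy-type representation that is holomorphic in the parameter. Alternatively, I would note that the trace of multiplication-by-\(H\) composed with the dual basis is a polynomial expression in the entries of a local \(\cO_\Delta\)-basis of \(\cB\), and hence holomorphic.
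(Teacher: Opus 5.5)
Your proposal is correct, but it runs in the opposite direction from the paper. The paper defines \(\Tr_{\pi_q}\) abstractly as the counit of finite flat duality, i.e.\ evaluation at \(1\) on \(\mathcal Hom_{\cO_\Delta}((\pi_q)_*\cO_{\mathcal Z_q},\cO_\Delta)\). This makes base change, including to non-reduced bases, formal. The paper then obtains the residue formula (ii) by citing the Koszul description of the relative dualising module of a complete intersection (Hartshorne, Lipman). You instead take (ii) as the definition, and prove holomorphy in \(t\) by integrating over a cycle \(\{|Q_j(t_0,\cdot)|=\varepsilon\}\) held fixed for \(t\) near \(t_0\). You then deduce (i) from local duality on each fibre, together with the fact that a map of free \(\cO_\Delta\)-modules of equal rank that is bijective on fibres is an isomorphism. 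Your route stays within classical analytic residue theory and isolates the genuine content: the residue pairing on \((\pi_q)_*\cO_{\mathcal Z_q}\) is perfect over \(\cO_\Delta\). The paper's route is shorter and ties the trace to Grothendieck duality by construction, at the price of transporting algebraic duality statements to the analytic setting. Three remarks on your version. First, (i) is partly formal in it: the isomorphism is built from \(\Tr_{\pi_q}\) itself, so \(\Tr_{\pi_q}(u\,\cdot)\) for any unit \(u\) would also be evaluation at \(1\) for its own isomorphism. The trace is pinned down by the residue normalisation on fibres, and identifying it with the abstract duality counit would require exactly the compatibility the paper cites. Second, base change to a non-reduced base is not a fibrewise statement, so derive it from (i) rather than from pointwise values (the paper itself only uses base change to \(t=0\)). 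Third, discard the fallback holomorphy argument. On a non-reduced fibre the trace form is degenerate: for \(A=\C[z]/(z^2)\) the residue functional \(1\mapsto0\), \(z\mapsto1\) is not of the form \(b\mapsto\Tr_{A/\C}(cb)\). Hence the residue is not a polynomial expression in traces of multiplication, and an algebraic substitute would need a Bezoutian (Scheja--Storch) generator. Your integral argument already suffices.
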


\begin{proof}
By \Cref{lem:finite-flat-family}, \((\pi_q)_*\cO_{\mathcal Z_q}\) is locally free. Finite duality identifies the relative dualising module with its dual over the base, and the counit is evaluation at the unit element. The Koszul description of the dualising module of a relative complete intersection identifies the displayed residue symbol with the relative Grothendieck residue. This proves \eqref{eq:relative-residue-trace}; see \cite{HartshorneResidues,LipmanResidues}. The dual of a finite locally free module, evaluation at its unit, and the Koszul identification all commute with base change.
\end{proof}

For later use, set
\begin{equation}\label{eq:relative-residue-function}
\mathscr R_{q,H}(t)
=
\Res_0
\left[
\frac{H(t,z,w)\,\dd z\wedge\dd w}
{Q_1(t),\ldots,Q_n(t)}
\right].
\end{equation}
This is holomorphic by \Cref{thm:relative-trace}. Put
\begin{equation}\label{eq:excess-order}
e(q)=\sum_{j=1}^{n}d_j-k.
\end{equation}

\begin{theorem}\label{thm:normal-cone-specialisation}
Let \(q\) be a normally admissible presentation, and let \(h\in R\). For \(t\neq0\),
\begin{equation}\label{eq:scaling-residue}
\mathscr R_{q,\,h(tz,w)\Delta_q(tz,w)}(t)
=
t^{e(q)}
\Res_0
\left[
\frac{h\,\dd z\wedge\dd w}{g_1,\ldots,g_n}
\right].
\end{equation}
Consequently,
\begin{equation}\label{eq:coefficient-specialisation}
\Res_0
\left[
\frac{h\,\dd z\wedge\dd w}{g_1,\ldots,g_n}
\right]
=
\frac{1}{e(q)!}
\left.
\frac{\dd^{e(q)}}{\dd t^{e(q)}}
\mathscr R_{q,\,h(tz,w)\Delta_q(tz,w)}(t)
\right|_{t=0}.
\end{equation}
If \(e(q)=0\), this becomes
\begin{equation}\label{eq:special-fibre-residue}
\Res_0
\left[
\frac{h\,\dd z\wedge\dd w}{g_1,\ldots,g_n}
\right]
=
\Res_0
\left[
\frac{h(0,w)\Delta_q(0,w)\,\dd z\wedge\dd w}
{\operatorname{in}_D(q_1),\ldots,
\operatorname{in}_D(q_n)}
\right].
\end{equation}
\end{theorem}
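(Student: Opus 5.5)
The plan is to prove the scaling identity \eqref{eq:scaling-residue} for fixed \(t\neq0\) by a change of variables in the Grothendieck residue. The coefficient formula \eqref{eq:coefficient-specialisation} and the case \(e(q)=0\) then follow from the holomorphy of \(\mathscr R_{q,H}(t)\) given by \Cref{thm:relative-trace}.

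First, by \Cref{lem:transformation-law} applied to \eqref{eq:change-fixed-generators},
\[
\Res_0\left[\frac{h\,\dd z\wedge\dd w}{g_1,\ldots,g_n}\right]
=\Res_0\left[\frac{h\Delta_q\,\dd z\wedge\dd w}{q_1,\ldots,q_n}\right].
\]
It therefore suffices to compare the right-hand side with \(\mathscr R_{q,H}(t)\), where \(H(t,z,w)=h(tz,w)\Delta_q(tz,w)\).

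Fix \(t\neq0\) small and consider the biholomorphism \(\phi_t(z,w)=(tz,w)\), which is a germ automorphism at the origin. The residue is invariant under pullback by biholomorphisms: the integral over the cycle \(\{|q_j|=\varepsilon\}\) transforms to the integral over \(\{|q_j\circ\phi_t|=\varepsilon\}\). Hence
\[
\Res_0\left[\frac{h\Delta_q\,\dd z\wedge\dd w}{q_1,\ldots,q_n}\right]
=\Res_0\left[\frac{H(t,z,w)\,\phi_t^*(\dd z\wedge\dd w)}{q_1(tz,w),\ldots,q_n(tz,w)}\right],
\qquad \phi_t^*(\dd z\wedge\dd w)=t^k\,\dd z\wedge\dd w.
\]
Next write \(q_j(tz,w)=t^{d_j}Q_j(t,z,w)\). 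Since \(t^{d_j}\) is a nonzero constant, \Cref{cor:units} (or directly the transformation law with a diagonal matrix) moves the factor \(t^{-\sum d_j}\) into the numerator. Collecting factors gives
\[
t^{k-\sum_j d_j}\,\mathscr R_{q,H}(t).
\]
This is the original residue, so \(\mathscr R_{q,H}(t)=t^{e(q)}\Res_0[\,h\,\dd z\wedge\dd w/g\,]\), which is \eqref{eq:scaling-residue}. One must check that for fixed \(t\neq0\) the \(Q_j(t,\cdot)\) have an isolated zero at the origin, so that the residue is defined. This is provided by \Cref{lem:finite-flat-family}.

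For the consequences, \(\mathscr R_{q,H}\) is holomorphic on \(\Delta\) by \Cref{thm:relative-trace}. It agrees with the monomial \(c\,t^{e(q)}\) on the punctured disc, hence on all of \(\Delta\) by continuity, where \(c\) is the desired residue. Here \(e(q)\ge0\) by admissibility. Taking the \(e(q)\)-th Taylor coefficient at \(t=0\) yields \eqref{eq:coefficient-specialisation}. When \(e(q)=0\), one evaluates at \(t=0\) using \(Q_j(0,\cdot)=\operatorname{in}_D(q_j)\) and \(H(0,z,w)=h(0,w)\Delta_q(0,w)\). By \Cref{thm:relative-trace}(iii), the value at \(t=0\) is the special-fibre residue, giving \eqref{eq:special-fibre-residue}.

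The main point requiring care is the identification at \(t=0\): that the holomorphic function \(\mathscr R_{q,H}\) extends across \(t=0\) with value the fibre residue. This is exactly what the base-change compatibility in \Cref{thm:relative-trace} supplies, resting on the flatness from \Cref{lem:finite-flat-family}. The change-of-variables step itself is routine.
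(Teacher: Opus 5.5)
Your proposal is correct and follows the paper's proof essentially step by step. You pass from \(g\) to \(q\) by the transformation law, producing the factor \(\Delta_q\). You then rescale \(z\mapsto tz\), which produces the factor \(t^{\sum_j d_j-k}=t^{e(q)}\). Finally, you invoke the holomorphy and base-change statements of \Cref{thm:relative-trace} to extend the identity across \(t=0\) and read off the Taylor coefficient or the special-fibre residue.
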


\begin{proof}
The transformation law applied to \eqref{eq:change-fixed-generators} gives
\[
\Res_0
\left[
\frac{h\,\dd z\wedge\dd w}{g_1,\ldots,g_n}
\right]
=
\Res_0
\left[
\frac{h\Delta_q\,\dd z\wedge\dd w}{q_1,\ldots,q_n}
\right].
\]
For \(t\neq0\), one has
\[
\frac{1}{Q_1(t,z,w)\cdots Q_n(t,z,w)}
=
\frac{t^{\sum_jd_j}}
{q_1(tz,w)\cdots q_n(tz,w)}.
\]
After the change of variables \(z'=tz\), the relation \(\dd z=t^{-k}\dd z'\) therefore gives the factor \(t^{\sum_jd_j-k}=t^{e(q)}\). This proves \eqref{eq:scaling-residue}. The residue function on the left is holomorphic at the origin by \Cref{thm:relative-trace}. Since \(e(q)\geq0\) by normal admissibility, the identity extends across \(t=0\), and extraction of the coefficient of \(t^{e(q)}\) gives \eqref{eq:coefficient-specialisation}. The final assertion follows by base change at \(t=0\).
\end{proof}

\begin{definition}\label{def:specialised-trace-coefficient}
We denote the right-hand side of \eqref{eq:coefficient-specialisation} by
\begin{equation}\label{eq:specialised-trace-coefficient}
\SpTr_{D,q}(h).
\end{equation}
If
\begin{equation}\label{eq:balanced-numerator}
h(tz,w)\Delta_q(tz,w)
=t^{e(q)}\widetilde h(t,z,w),
\end{equation}
we define the associated normal-cone residue class by
\begin{equation}\label{eq:normal-cone-residue-class}
\Sp^{\log}_{D,q}(h)
=
\widetilde h(0,z,w)
\left[
\frac{\dd z\wedge\dd w}
{\operatorname{in}_D(q_1),\ldots,
\operatorname{in}_D(q_n)}
\right]
\in H^0\bigl(\mathcal Z_{q,0},\omega_{\mathcal Z_{q,0}}\bigr).
\end{equation}
\end{definition}

\begin{corollary}\label{cor:normal-cone-class}
Under \eqref{eq:balanced-numerator},
\begin{equation}\label{eq:degree-normal-cone-class}
\Tr_{\pi_q,0}\bigl(\Sp^{\log}_{D,q}(h)\bigr)
=
\SpTr_{D,q}(h)
=
\Res_0
\left[
\frac{h\,\dd z\wedge\dd w}{g_1,\ldots,g_n}
\right].
\end{equation}
\end{corollary}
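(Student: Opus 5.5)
The plan is to read off both equalities in \eqref{eq:degree-normal-cone-class} from \Cref{thm:relative-trace} and \Cref{thm:normal-cone-specialisation}, using the factorisation \eqref{eq:balanced-numerator} to move the power of \(t\) from the numerator into the coefficient extraction.

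First, I will handle the second equality, \(\SpTr_{D,q}(h)=\Res_0[h\,\dd z\wedge\dd w/(g_1,\ldots,g_n)]\). This is exactly \eqref{eq:coefficient-specialisation} together with the definition \eqref{eq:specialised-trace-coefficient}, so nothing further is needed there.

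Second, for the first equality, I will substitute \eqref{eq:balanced-numerator} into the relative residue function. Since the residue symbol in \eqref{eq:relative-residue-function} is \(\cO_\Delta\)-linear in the numerator, for all \(t\) one gets
\[
\mathscr R_{q,\,h(tz,w)\Delta_q(tz,w)}(t)=t^{e(q)}\,\mathscr R_{q,\widetilde h}(t).
\]
This linearity is that of \(\Tr_{\pi_q}\) over \(\cO_\Delta\), via \eqref{eq:relative-residue-trace}. Here \(\widetilde h\) is holomorphic on \(\Delta\times V\), because \(t^{e(q)}\) divides a holomorphic function of \((t,z,w)\). Then \(\mathscr R_{q,\widetilde h}\) is holomorphic by \Cref{thm:relative-trace}, and the \(e(q)\)-th Taylor coefficient of \(t^{e(q)}\mathscr R_{q,\widetilde h}(t)\) at \(t=0\) is \(\mathscr R_{q,\widetilde h}(0)\). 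Hence \(\SpTr_{D,q}(h)=\mathscr R_{q,\widetilde h}(0)\).

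Finally, I will apply base change, \Cref{thm:relative-trace}(iii), at \(t=0\). The restriction of the relative trace to the special fibre \(\mathcal Z_{q,0}\) is its Grothendieck duality trace. Since \(Q_j(0,z,w)=\operatorname{in}_D(q_j)\), the restriction of \(\widetilde h\,[\dd z\wedge\dd w/(Q_1,\ldots,Q_n)]\) to the special fibre is precisely \(\Sp^{\log}_{D,q}(h)\) as defined in \eqref{eq:normal-cone-residue-class}. Therefore \(\Tr_{\pi_q,0}(\Sp^{\log}_{D,q}(h))=\mathscr R_{q,\widetilde h}(0)\).

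The only delicate point is that \(\widetilde h\) is defined only up to the ideal \((Q_1,\ldots,Q_n)\) plus \(t\)-torsion. I need the restricted class to be independent of the chosen \(\widetilde h\) and to be a genuine section of \(\omega_{\mathcal Z_{q,0}}\). I will dispose of this using flatness from \Cref{lem:finite-flat-family}: \(t\) is a non-zero-divisor on \(\cA_q\), so \(\widetilde h\) is determined modulo \((Q_j)\). Its class, and hence its restriction to the special fibre, is therefore well defined.
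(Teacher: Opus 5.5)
Your proof is correct and follows essentially the paper's route: the paper divides \eqref{eq:scaling-residue} by \(t^{e(q)}\) and applies base change at \(t=0\) in \Cref{thm:relative-trace}. You do the same, except that you obtain the second equality from \eqref{eq:coefficient-specialisation} and then identify the \(e(q)\)-th Taylor coefficient with \(\mathscr R_{q,\widetilde h}(0)\); your flatness check on the well-definedness of \(\widetilde h\) is a harmless extra detail.
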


\begin{proof}
Divide \eqref{eq:scaling-residue} by \(t^{e(q)}\) and apply base change at \(t=0\) in \Cref{thm:relative-trace}.
\end{proof}

\begin{proposition}\label{prop:numerical-canonicity}
Let \(q\) and \(q'\) be normally admissible presentations of the same fixed-point ideal. Then
\[
\SpTr_{D,q}(h)=\SpTr_{D,q'}(h)=\lambda_{f,p}([h]).
\]
Thus, when \(h\,\dd z\wedge\dd w\) represents a fixed intrinsic numerator, the specialisation coefficient is independent of the admissible presentation, the adapted coordinates and the representative of \([h]\in A_{f,p}\). The normal-cone class itself is attached to the chosen admissible one-parameter family; here, only its trace is claimed to be canonical.
\end{proposition}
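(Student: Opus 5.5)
The plan is to reduce everything to \Cref{thm:normal-cone-specialisation}: the specialisation coefficient is, by construction, an expression that recovers the ordinary residue against the coordinate fixed-point sequence. Fix a normally admissible presentation \(q\) with matrix \(A\), \(\Delta_q=\det A\), and excess \(e(q)\). By \eqref{eq:coefficient-specialisation},
\[
\SpTr_{D,q}(h)=\Res_0\left[\frac{h\,\dd z\wedge\dd w}{g_1,\ldots,g_n}\right]=\lambda_{f,p}([h]).
\]
The right-hand side contains no reference to \(q\). Applying the same identity to \(q'\), with its own matrix \(A'\) and excess \(e(q')\), gives the equality \(\SpTr_{D,q}(h)=\SpTr_{D,q'}(h)\). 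No comparison between the two rescaled families is needed, and in particular \(e(q)\) and \(e(q')\) may differ.

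Next I would record the independence statements, using the residue-functional description in turn.

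\emph{Representative.} If \(h-h'\in I\), write \(h-h'=\sum c_jg_j\). Then each term \(c_jg_j\) has zero residue against \((g_1,\ldots,g_n)\), because the residue of \(g_j c\,\dd x/(g_1,\ldots,g_n)\) vanishes. This is the standard property that \(\lambda_g\) factors through \(A_g\), which is built into \eqref{eq:residue-functional}. Hence \(\SpTr_{D,q}\) depends only on \([h]\in A_{f,p}\).

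\emph{Coordinates.} For a change of adapted coordinates, I would use the hypothesis that \(h\,\dd z\wedge\dd w\) represents a fixed intrinsic numerator, i.e.\ the class of a top form in \(A_{f,p}\otimes\Omega^n\). The coordinate fixed-point sequences \(g\) and \(g'\) generate the same ideal \(I_{f,p}\), so they are related by an invertible matrix \(C\). The transformation law, \Cref{lem:transformation-law}, together with the Jacobian factor of the coordinate change, shows that \(\Res_p[h\,\dd x/g]\) is the intrinsic local duality trace of the form. This is exactly the coordinate independence already invoked in the proof of \Cref{thm:classical-lefschetz}, and I would simply quote it.

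The only point requiring care, and the step I expect to be the main obstacle, is that \eqref{eq:coefficient-specialisation} genuinely applies for each admissible \(q\). Two things must be checked. First, \(\mathscr R_{q,h\Delta_q}(t)\) must be holomorphic across \(t=0\), so that taking the \(e(q)\)-th Taylor coefficient is legitimate; this is supplied by \Cref{lem:finite-flat-family} and \Cref{thm:relative-trace}. Second, one needs \(e(q)\geq0\), which is part of \Cref{def:admissible-presentation}. Given both, the identity \eqref{eq:scaling-residue} holds on the punctured disc and extends by continuity, and the coefficient extraction is valid.

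Finally, I would add a remark that the normal-cone class \(\Sp^{\log}_{D,q}(h)\) lives on the special fibre \(\mathcal Z_{q,0}\), which depends on the initial ideal of \(q\). Only its image under \(\Tr_{\pi_q,0}\) is identified, via \Cref{cor:normal-cone-class}, so the proposition claims canonicity of the trace only.
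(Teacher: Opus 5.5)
Your proposal is correct and is essentially the paper's proof: both coefficients are identified with \(\lambda_{f,p}([h])\) through \Cref{thm:normal-cone-specialisation}, independence of the representative comes from the residue annihilating \(I_{f,p}\), and independence of coordinates comes from the transformation law (your extra checks of holomorphy at \(t=0\) and of \(e(q)\geq0\) are already built into that theorem). One precision on the coordinate step: the invariant object is the function \(h\in A_{f,p}\) paired with \(\dd x/(x-f(x))\), since \(\det C\) agrees with the Jacobian of the coordinate change modulo \(I_{f,p}\) and the two factors cancel, whereas a fixed top form in \(A_{f,p}\otimes\Omega^n\), as in your gloss, would pick up that Jacobian factor against the new coordinate sequence.
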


\begin{proof}
Both coefficients equal the intrinsic Grothendieck residue by \Cref{thm:normal-cone-specialisation}. Independence of the representative follows because the residue annihilates the fixed-point ideal. Coordinate independence follows from the ordinary transformation law for the intrinsic meromorphic numerator.
\end{proof}

The following criteria show that normal admissibility is not a restatement of the conclusion.

\begin{proposition}\label{prop:admissibility-criteria}
Let \(p\in D\) be an isolated fixed point.
\begin{enumerate}[label=\textup{(\roman*)},leftmargin=2.2em]
\item If \(p\) is normally non-resonant, the coordinate fixed-point equations form a normally admissible presentation, and \(e=0\).
\item Suppose that, after an adapted change of coordinates and multiplication by units, the fixed-point ideal has a presentation
\[
(z_1^{r_1+1},\ldots,z_k^{r_k+1},h_1(w),\ldots,h_m(w)),
\]
where \((h_1,\ldots,h_m)\) has finite colength. Then the presentation is normally admissible.
\item The presentation \eqref{eq:hidden-admissible-presentation} in \Cref{ex:hidden-initial-relation} is normally admissible, although the fixed-point equations in the chosen coordinates are not.
\end{enumerate}
\end{proposition}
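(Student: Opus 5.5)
Each of the three items is a direct verification against \Cref{def:admissible-presentation}: the initial forms must have finite colength, and \(\sum_j\nu_D(q_j)\geq k\). In each case I will compute the normal orders \(d_j\) and the initial forms explicitly.

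For (i), take the coordinate equations \(g_i=z_ib_i\) and \(q_\alpha=w_\alpha-F_\alpha(z,w)\). Normal non-resonance means \(b_i(0,0)\neq0\), so \(\nu_D(g_i)=1\) and \(\operatorname{in}_D(g_i)=z_ib_i(0,w)\), a unit multiple of \(z_i\) near \(0\). The \(q_\alpha\) have normal order \(0\) unless they vanish identically on \(S_p\). They cannot: \Cref{thm:nonresonant-reduction} and the remark preceding it show that \(h_\alpha=q_\alpha(0,w)\) have an isolated common zero, so each \(h_\alpha\neq0\). Hence \(\operatorname{in}_D(q_\alpha)=h_\alpha(w)\), and the initial ideal is \((z_1,\ldots,z_k,h_1,\ldots,h_m)\), which has finite colength. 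Finally \(\sum_jd_j=k\), so \(e=0\).

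For (ii), I first note that an adapted change of coordinates preserves the branch ideals, and that multiplication of the generators by units changes \(A\) only by an invertible diagonal factor. The monomials \(z_i^{r_i+1}\) are normally homogeneous of order \(r_i+1\), and the \(h_\alpha(w)\) have order \(0\). All generators are therefore their own initial forms. The colength of \((z_1^{r_1+1},\ldots,z_k^{r_k+1},h_1,\ldots,h_m)\) is \(\prod_i(r_i+1)\) times the colength of \((h_\alpha)\) in \(\C\{w\}\), which is finite. For the order condition, \(\sum_j d_j=\sum_i(r_i+1)\geq k\), giving \(e=\sum_ir_i\). Regularity of the sequence follows because the ideal has finite colength and is generated by \(n\) elements.

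For (iii), the initial forms of \(q_1=z^3\) and \(q_2=w-z^2\) are \(z^3\) and \(w\). The quotient \(\C\{z,w\}/(z^3,w)\) has length \(3\), and \(d_1+d_2=3\geq1=k\). This presentation generates \(I_{f,0}\), as \Cref{ex:hidden-initial-relation} shows via \(zw-z(w-z^2)=z^3\), and it is a regular sequence. For the coordinate equations \(g_1=zw\) and \(g_2=w-z^2\), the initial forms are \(zw\) and \(w\). These generate only \((w)\), which has infinite colength, so the coordinate presentation is not admissible.

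The only step requiring care is (i): I need that no \(q_\alpha\) lies in \((z_1,\ldots,z_k)\). This is exactly where the isolatedness of the fixed point on the stratum, derived from the primary fixed-point ideal, is used.
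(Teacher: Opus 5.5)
Your proposal is correct and follows the paper's route: a direct check of \Cref{def:admissible-presentation} in each case. In (i) the normal initial ideal is \((z_1,\ldots,z_k,h_1,\ldots,h_m)\) with \(\sum_j d_j=k\); in (ii) the generators are already their own normal initial forms; in (iii) the ideal \((z^3,w)\) has finite colength, whereas \((zw,w)=(w)\) does not. You also supply details the paper leaves implicit, namely why no \(h_\alpha\) vanishes identically on \(S_p\) and why the coordinate presentation in (iii) fails.
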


\begin{proof}
In the first case, the fixed-point equations in the normal variables are unit multiples of \(z_i\), whereas the normal initial forms of the tangential equations are the fixed-point equations of the restricted map on the stratum. Their common zero is isolated. The sum of the normal orders is \(k\). The displayed complete intersection proves the second assertion. The third follows from \((\operatorname{in}_D q_1,\operatorname{in}_D q_2)=(z^3,w)\).
\end{proof}

\section{The holomorphic specialisation}\label{sec:holomorphic-specialisation}

At \(y=0\), the logarithmic polynomial recovers the ordinary holomorphic Lefschetz number. For a boundary fixed point, set
\begin{equation}\label{eq:log-holomorphic-local-term}
\LT_{p,D}^{\log}(f,E,\Phi)
=
\Ind_{p,D}^{\mathrm{bdry}}(f,E,\Phi;0).
\end{equation}
Numerically, this is the classical local Lefschetz term; the boundary filtration, however, provides a refined description of it.

\begin{theorem}\label{thm:holomorphic-specialisation}
Let \(p\in D\) be normally admissible, and let \(q\) be a normally admissible presentation. Then
\begin{equation}\label{eq:holomorphic-specialisation}
\LT_{p,D}^{\log}(f,E,\Phi)
=
\SpTr_{D,q}(\tau_{\Phi,p}).
\end{equation}
If \(e(q)=0\), the right-hand side is the trace of the special-fibre class
\[
\tau_{\Phi,p}(0,w)\Delta_q(0,w)
\left[
\frac{\dd z\wedge\dd w}
{\operatorname{in}_D(q_1),\ldots,
\operatorname{in}_D(q_n)}
\right].
\]
In the normally non-resonant case, this class gives the stratum residue in \Cref{cor:y-zero-nonresonant}.
\end{theorem}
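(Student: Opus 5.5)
The plan is to unwind the definitions and reduce everything to \Cref{thm:normal-cone-specialisation} and \Cref{prop:numerical-canonicity}. First I set \(y=0\) in the boundary index \eqref{eq:boundary-index}. Since \(\det(1+yJ_f^{\log})|_{y=0}=1\), the local term \(\LT_{p,D}^{\log}(f,E,\Phi)\) equals \(\lambda_{f,p}(\tau_{\Phi,p})\). By \eqref{eq:residue-functional} this is the Grothendieck residue of \(\tau_{\Phi,p}\,\dd z\wedge\dd w\) over the coordinate fixed-point equations \(g_1,\ldots,g_n\); this is also the classical term \eqref{eq:intro-classical-residue}, by \Cref{thm:classical-lefschetz}. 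Next I choose a holomorphic representative \(h\) of \(\tau_{\Phi,p}\), for instance the trace of the matrix of \(\Phi\) in a local frame. I then apply \eqref{eq:coefficient-specialisation} to this \(h\), which gives \eqref{eq:holomorphic-specialisation}. \Cref{prop:numerical-canonicity} shows that the right-hand side depends neither on the representative, nor on the frame (via \Cref{lem:trace-class}), nor on the admissible presentation \(q\).

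For the case \(e(q)=0\), I invoke \eqref{eq:special-fibre-residue} directly. The numerator \(h(tz,w)\Delta_q(tz,w)\) evaluated at \(t=0\) is \(h(0,w)\Delta_q(0,w)\), so the balanced numerator in \eqref{eq:balanced-numerator} has \(\widetilde h(0,z,w)=\tau_{\Phi,p}(0,w)\Delta_q(0,w)\). \Cref{cor:normal-cone-class} then identifies the coefficient with \(\Tr_{\pi_q,0}\) of the displayed special-fibre class. One point needs a remark: \(\tau_{\Phi,p}(0,w)\) means the restriction of the chosen representative. Its class in the special-fibre algebra may depend on that choice, but its trace does not, by the canonicity just established.

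For the non-resonant case, I use \Cref{prop:admissibility-criteria}(i) and take \(q=(z_1b_1,\ldots,z_kb_k,q_1,\ldots,q_m)\), which has \(e=0\). Here I would take \(q=g\) itself, so that \(\Delta_q=1\). The normal initial forms are then \(z_ib_i(0,w)\) and \(h_\alpha(w)\), where the \(b_i(0,w)\) are units.

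The resulting special-fibre residue is
\[
\Res_0\left[\frac{\tau(0,w)\,\dd z\wedge\dd w}{z_1b_1(0,w),\ldots,z_kb_k(0,w),h_1,\ldots,h_m}\right].
\]
By \Cref{cor:units}, this equals the residue with the factor \(\prod_i b_i(0,w)^{-1}=\det(1-\dd_Nf)^{-1}\) (see \eqref{eq:normal-determinant}) moved into the numerator. \Cref{lem:iterated-reduction} then reduces it to the stratum residue of \Cref{cor:y-zero-nonresonant}.

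The main obstacle is bookkeeping rather than substance: checking that the restriction \(\tau_{\Phi,p}(0,w)\) agrees with \(\tau_{\Phi|S_p}\) under the identification of \(A_{f,p}\) with \(\cO_{S_p}/(h)\). I would argue this holds because, in the non-resonant case, the fixed ideal contains \(z_1,\ldots,z_k\) up to units. Setting \(z=0\) is therefore exactly that identification.
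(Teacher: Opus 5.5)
Your proposal is correct and follows the paper's proof: you set \(y=0\), apply \Cref{thm:normal-cone-specialisation} with \(h\) a representative of \(\tau_{\Phi,p}\) (using \eqref{eq:special-fibre-residue} when \(e(q)=0\)), and in the non-resonant case take \(q=g\) via \Cref{prop:admissibility-criteria}(i). The only difference is that you carry out the unit transfer and the iterated reduction on the special fibre yourself, whereas the paper cites \Cref{thm:nonresonant-reduction} at \(y=0\), which rests on the same two lemmas.
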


\begin{proof}
Apply \Cref{thm:normal-cone-specialisation} with \(h=\tau_{\Phi,p}\). The final assertion follows from \Cref{thm:nonresonant-reduction} and \Cref{prop:admissibility-criteria}.
\end{proof}

\begin{corollary}\label{cor:global-holomorphic-splitting}
One has
\begin{equation}\label{eq:global-holomorphic-splitting}
L(f,E,\Phi)
=
\sum_{p\in\Fix(f)\setminus D}\LT_p(f,E,\Phi)
+
\sum_{p\in\Fix(f)\cap D}\LT_{p,D}^{\log}(f,E,\Phi).
\end{equation}
If every boundary fixed point is normally admissible, each term in the second sum is given by \eqref{eq:holomorphic-specialisation}.
\end{corollary}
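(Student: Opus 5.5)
The splitting is the $y=0$ specialisation of \Cref{cor:interior-boundary-splitting}, and I will derive it that way. First, I evaluate \eqref{eq:y-zero} at $y=0$, which gives $L_0^{\log}(f,E,\Phi;D)=L(f,E,\Phi)$. Then I set $y=0$ in \eqref{eq:interior-boundary-splitting}. Since $\det(1+yJ)$ becomes $1$ at $y=0$, the interior index $\Ind_p^{\mathrm{int}}(f,E,\Phi;0)$ equals $\lambda_{f,p}(\tau_{\Phi,p})$. By \eqref{eq:residue-functional} and \Cref{thm:classical-lefschetz}, this is exactly the classical local term $\LT_p(f,E,\Phi)$ of \eqref{eq:intro-classical-residue}. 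The boundary index at $y=0$ is, by definition \eqref{eq:log-holomorphic-local-term}, the term $\LT_{p,D}^{\log}(f,E,\Phi)$. These two observations yield \eqref{eq:global-holomorphic-splitting}.

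Alternatively, one can bypass the $y$-formula and apply \Cref{thm:classical-lefschetz} directly. One partitions $\Fix(f)$ into $\Fix(f)\setminus D$ and $\Fix(f)\cap D$ and observes that each boundary summand is $\lambda_{f,p}(\tau_{\Phi,p})=\Ind^{\mathrm{bdry}}_{p,D}(f,E,\Phi;0)$. Both routes only use that the numerator at $y=0$ reduces to the trace class, so there is no real obstacle here.

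For the second assertion, suppose every boundary fixed point $p$ is normally admissible. I choose for each such $p$ a normally admissible presentation $q$ of $I_{f,p}$ in adapted coordinates. \Cref{thm:holomorphic-specialisation} then rewrites the corresponding summand as $\SpTr_{D,q}(\tau_{\Phi,p})$. The only point needing care is that this value does not depend on the choice of $q$, of adapted coordinates, or of a representative of $\tau_{\Phi,p}$; otherwise the statement \enquote{is given by} would be ambiguous. This independence is exactly \Cref{prop:numerical-canonicity}, combined with \Cref{lem:trace-class} for independence of the frame of $E$. So the step I expect to be the main (though mild) obstacle is simply making sure the canonicity statement is invoked, so that each boundary term is intrinsically determined.
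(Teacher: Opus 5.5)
Your proposal is correct and follows the paper's route: the paper gives no separate proof, because the splitting is the $y=0$ case of \Cref{cor:interior-boundary-splitting} via \eqref{eq:y-zero}, and the second assertion is \Cref{thm:holomorphic-specialisation} applied at each boundary point. Your appeal to \Cref{prop:numerical-canonicity} is harmless but not strictly needed: \Cref{thm:holomorphic-specialisation} already holds for every admissible presentation $q$ and every representative of $\tau_{\Phi,p}$, and its left-hand side does not depend on these choices.
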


\section{Resonant boundary indices}\label{sec:resonance}

For a boundary fixed point and trivial coefficients, write
\[
\Ind_{p,D}^{\mathrm{dR},\log}(f)
=
\Ind_{p,D}^{\mathrm{bdry}}(f,\cO_X,\operatorname{id};-1).
\]

The local terms in \Cref{thm:y-lefschetz} may be specialised by \Cref{thm:normal-cone-specialisation}. A boundary fixed point is normally resonant if \(1\) is an eigenvalue of \(\dd_Nf_p\). The local residue remains well defined because the fixed-point scheme is zero-dimensional at the point, although the reduction in \Cref{thm:nonresonant-reduction} no longer applies.

The one-dimensional case is explicit.

\begin{proposition}\label{prop:curve-contact}
Let \(X\) be a complex curve, let \(D=\{p\}\), and let \(f:(X,D)\to(X,D)\) be strict. Choose a coordinate \(z\) centred at \(p\) and write
\[
f(z)=z a(z),
\qquad a(0)\neq0.
\]
If \(p\) is an isolated fixed point, then
\begin{equation}\label{eq:curve-contact-index}
\Ind^{\mathrm{dR},\log}_{p,D}(f)
=
\ord_p(1-a).
\end{equation}
\end{proposition}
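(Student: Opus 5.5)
We compute the local polynomial of \Cref{thm:y-lefschetz} directly in the coordinate \(z\) and then set \(y=-1\). Here \(n=1\), \(k=1\), \(E=\cO_X\) and \(\Phi=\operatorname{id}\), so \(\tau_{\Phi,p}=1\). The fixed-point equation is
\[
g=z-f(z)=z\bigl(1-a(z)\bigr)=z\,b(z),
\qquad b=1-a.
\]
The logarithmic cotangent bundle has frame \(\dd z/z\). By \eqref{eq:dlog-pullback}, the endomorphism \(J_f^{\log}\) is multiplication by
\[
1+\frac{z\,a'(z)}{a(z)}
\]
on the fixed-point algebra. The boundary index is therefore
\[
\Ind_{p,D}^{\mathrm{bdry}}(f,\cO_X,\operatorname{id};y)
=
\Res_p\left[\frac{\bigl(1+y+y\,z a'/a\bigr)\,\dd z}{z\,b(z)}\right].
\]
At \(y=-1\), the numerator becomes \(-z a'/a\). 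The factor \(z\) cancels, and we obtain the one-variable residue
\[
\Ind^{\mathrm{dR},\log}_{p,D}(f)
=
\Res_{z=0}\frac{-a'(z)\,\dd z}{a(z)\,\bigl(1-a(z)\bigr)}.
\]
Since \(a,b\in\C\{z\}\), the one-variable Grothendieck residue here is the ordinary Cauchy residue of a meromorphic \(1\)-form.

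The next step is to evaluate this residue. Note that \(-a'=b'\). The form is therefore
\[
\frac{b'(z)}{b(z)}\cdot\frac{\dd z}{a(z)}.
\]
Write \(r=\ord_0 b\), which is finite because \(p\) is isolated. Then \(b=z^r u\) with \(u\) a unit, and \(b'/b=r/z+u'/u\). Since \(u'/u\) and \(1/a\) are holomorphic, the residue equals \(r/a(0)\).

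Two cases arise.

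\emph{Resonant case.} If \(r\ge1\), then \(a(0)=1-b(0)=1\), and the residue is \(r=\ord_p(1-a)\).

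\emph{Non-resonant case.} If \(r=0\), the form is holomorphic and the residue is \(0=\ord_p(1-a)\). This is consistent with \Cref{cor:boundary-vanishing}.

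An alternative route is to use the identity
\[
\frac{-a'}{a(1-a)}=\frac{b'}{b}+\frac{b'}{a},
\]
which follows from \(\tfrac1{ab}=\tfrac1a+\tfrac1b\) when \(a+b=1\). Then \(\Res\,(b'/b)\,\dd z=r\) and \(b'/a\) is holomorphic, so this confirms the answer cleanly in both cases.

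The only delicate point is sign and normalisation. One must check that the \(y\)-variable enters through \(\det(1+yJ_f^{\log})\), with \(J\) the pull-back action on \(\dd z/z\) as in \eqref{eq:dlog-pullback}. One must also check that the residue symbol matches the Cauchy residue in dimension one. Both are immediate from the definitions in \Cref{sec:log-data,sec:y-formula}, so there is no substantial obstacle. The independence from the choice of coordinate \(z\) is automatic, because \(\ord_p(1-a)\) is the length of the fixed-point algebra minus one and the index is intrinsic.
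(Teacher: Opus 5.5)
Your proof is correct and is essentially the paper's argument: you reduce the $y=-1$ term to $\Res_0\bigl[-a'\,\dd z/(a(1-a))\bigr]$ exactly as the paper does, and your alternative splitting $b'/b+b'/a$ is the paper's decomposition $b'/b+b'/(1-b)$, since $a=1-b$. Your main computation of the residue as $r/a(0)$ only differs cosmetically, in that it treats the unit and non-unit cases of $b$ together rather than separately.
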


\begin{proof}
The logarithmic cotangent line is generated by \(\dd z/z\), and the induced multiplier is
\[
J_f^{\log}(z)
=
\frac{zf'(z)}{f(z)}
=
1+\frac{za'(z)}{a(z)}.
\]
At \(y=-1\), the local term is
\begin{align*}
\Res_0
\left[
\frac{1-J_f^{\log}(z)}{z-f(z)}\,\dd z
\right]
&=
\Res_0
\left[
-\frac{a'(z)}{a(z)(1-a(z))}\,\dd z
\right].
\end{align*}
Put \(b=1-a\). If \(b\) is a unit, the displayed form is holomorphic and both sides of \eqref{eq:curve-contact-index} are zero. Otherwise \(a(0)=1\), and
\[
-\frac{a'}{a(1-a)}\,\dd z
=
\frac{b'}{b(1-b)}\,\dd z
=
\frac{b'}{b}\,\dd z
+
\frac{b'}{1-b}\,\dd z.
\]
The second summand is holomorphic, whereas the first has residue \(\ord_0(b)\). This proves the formula.
\end{proof}

The same calculation factorises when the normal and tangential variables separate.

\begin{proposition}\label{prop:split-contact}
Let
\[
D=\{z_1\cdots z_k=0\}\subset(\C^{k+m},0),
\]
and suppose that
\[
f(z,w)
=
\bigl(z_1a_1(z_1),\ldots,z_ka_k(z_k),F(w)\bigr),
\]
where the origin is an isolated fixed point. Put
\[
r_i=\ord_0(1-a_i).
\]
Then
\begin{equation}\label{eq:split-contact-product}
\Ind^{\mathrm{dR},\log}_{0,D}(f)
=
\left(\prod_{i=1}^kr_i\right)
\Res_0
\left[
\frac{\det(1-J_F)\,\dd w_1\wedge\cdots\wedge\dd w_m}
{w_1-F_1,\ldots,w_m-F_m}
\right],
\end{equation}
where \(J_F\) denotes the cotangent endomorphism induced by \(F\) on its local fixed-point algebra. If the fixed point of \(F\) is non-degenerate, the second factor is one.
\end{proposition}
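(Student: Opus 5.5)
The local residue factorises because the map is a product of one-variable normal maps with the tangential map \(F\). By \Cref{thm:y-lefschetz} with trivial coefficients at \(y=-1\), the index is
\[
\Res_0\left[\frac{\det(1-J_f^{\log})\,\dd z\wedge\dd w}{z_1(1-a_1),\ldots,z_k(1-a_k),w_1-F_1,\ldots,w_m-F_m}\right].
\]
First compute \(J_f^{\log}\) in the frame \eqref{eq:log-frame}. By \eqref{eq:dlog-pullback},
\[
f^*(\dd z_i/z_i)=(1+z_ia_i'(z_i)/a_i(z_i))\,\dd z_i/z_i,
\]
which involves only \(z_i\). Since \(F\) depends only on \(w\), we have \(f^*\dd w_\alpha=\dd F_\alpha\), which has no normal component. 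Hence \(J_f^{\log}\) is block diagonal, with entries \(J_i(z_i)=1+z_ia_i'/a_i\) followed by the ordinary cotangent matrix \(J_F(w)\). It follows that
\[
\det(1-J_f^{\log})=\prod_{i=1}^k\bigl(1-J_i(z_i)\bigr)\cdot\det(1-J_F)(w),
\]
as genuine holomorphic functions, not merely classes in the fixed-point algebra.

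Next, observe that the origin is an isolated fixed point of \(f\) exactly when each \(z_i(1-a_i(z_i))\) has an isolated zero at \(0\) and the \(w-F\) system does too. The fixed-point algebra is then the tensor product of the factor algebras, so the numerator, the denominators and the form \(\dd z\wedge\dd w\) all split by variable groups. The Grothendieck residue of a product system with separated variables is the product of the residues. I would justify this either from the Cauchy integral over a product cycle \(\{|z_i|=\epsilon_i,\ |w_\alpha-F_\alpha|=\epsilon\}\) via Fubini, or from the tensor-product compatibility of local duality traces. This gives
\[
\prod_{i=1}^k\Res_0\left[\frac{(1-J_i)\,\dd z_i}{z_i-z_ia_i}\right]\cdot\Res_0\left[\frac{\det(1-J_F)\,\dd w}{w-F}\right].
\]

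Each one-variable factor is exactly the curve computation of \Cref{prop:curve-contact}, so it equals \(\ord_0(1-a_i)=r_i\). (If some \(a_i(0)\neq1\), then \(r_i=0\), which is consistent with \Cref{cor:boundary-vanishing}.) This yields \eqref{eq:split-contact-product}.

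For the non-degenerate claim, the last factor is the \(y=-1\) value of the classical local term. When \(\det(1-\dd F_0)\neq0\), the residue reduces to
\[
\frac{\det(1-\dd F_0^*)}{\det(1-\dd F_0)}=1,
\]
by \Cref{thm:classical-lefschetz}, or equivalently \Cref{cor:nondegenerate-formula} with \(k=0\).

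The main step needing care is the product formula for residues. It is standard, but it should be cited or justified via the product cycle, and one must check that isolatedness of the full fixed point forces each factor system to be isolated; this holds because the zero set is the product of the factor zero sets.
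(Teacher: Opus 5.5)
Your proposal is correct and follows the paper's argument: the logarithmic differential is block diagonal, the residue factors by the product formula for separated variables, and \Cref{prop:curve-contact} is applied to each normal factor. The only minor difference is in the final claim: the paper observes that \(\det(1-J_F)\) is the Jacobian of \(w-F(w)\), so the second factor always equals the length of the fixed-point algebra of \(F\), whereas you evaluate it only in the non-degenerate case as \(\det(1-\dd F_0^*)/\det(1-\dd F_0)=1\). That evaluation is the standard formula for a residue at a simple zero, not really a consequence of \Cref{thm:classical-lefschetz}.
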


\begin{proof}
The logarithmic differential is block diagonal, and the defining equations separate into normal and tangential variables. The product property of Grothendieck residues therefore factors the local term into the one-dimensional normal residues and the ordinary de Rham fixed-point residue of \(F\). Apply \Cref{prop:curve-contact} to each normal coordinate. Moreover, \(\det(1-J_F)\) is the Jacobian determinant of the tangential fixed-point sequence \(w-F(w)\); hence its residue is the length of the local fixed-point algebra of \(F\). In particular, it is one when that fixed point is non-degenerate.
\end{proof}

The example from \Cref{ex:hidden-initial-relation} gives a calculation in which the initial forms of the coordinate presentation are insufficient, whereas an alternative presentation is admissible.

\begin{proposition}\label{prop:hidden-relation-index}
For
$
f(z,w)=\bigl(z(1-w),z^2\bigr),
\ D=\{z=0\},
$
the origin is an isolated normally resonant fixed point, and
\begin{equation}\label{eq:hidden-local-polynomial}
\Ind_{0,D}^{\mathrm{bdry}}(f;y)=2y^2.
\end{equation}
Consequently,
\[
\LT_{0,D}^{\log}(f)=0,
\qquad
\Ind_{0,D}^{\mathrm{dR},\log}(f)=2.
\]
\end{proposition}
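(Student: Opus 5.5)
The plan is a direct computation: identify the local fixed-point algebra explicitly, compute \(J_f^{\log}\) on it, and evaluate the residue functional \(\lambda_{f,0}\) using the admissible presentation from \Cref{ex:hidden-initial-relation}.

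\emph{Step 1: the fixed point.} The map is strict, since \(f^*z=z(1-w)\) and \(1-w\) is a unit at the origin. Its normal multiplier is \(a=1-w\), so \(a(0)=1\) and \(\dd_Nf_0=1\); hence the origin is normally resonant. By \eqref{eq:hidden-fixed-ideal} and \eqref{eq:hidden-admissible-presentation},
\[
I_{f,0}=(zw,\,w-z^2)=(z^3,\,w-z^2).
\]
This ideal has finite colength, so the origin is isolated. Moreover \(A_{f,0}\simeq\C\{z\}/(z^3)\), with basis \(1,z,z^2\) and \(w\equiv z^2\).

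\emph{Step 2: the numerator.} In the frame \(\dd z/z,\ \dd w\), formula \eqref{eq:dlog-pullback} gives
\[
f^*\frac{\dd z}{z}=\frac{\dd z}{z}-\frac{\dd w}{1-w},
\qquad
f^*\dd w=2z\,\dd z=2z^2\,\frac{\dd z}{z}.
\]
The determinant is insensitive to the transposition convention, so
\[
\det(1+yJ_f^{\log})=(1+y)+\frac{2y^2z^2}{1-w}.
\]
In \(A_{f,0}\) we have \(z^2w=z^4=0\), hence \(z^2/(1-w)\equiv z^2\). The numerator is therefore the class of \(1+y+2y^2z^2\).

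\emph{Step 3: the residue functional.} Since \(z^3=zw-z(w-z^2)\), the change-of-generators matrix is \(\begin{pmatrix}1&-z\\0&1\end{pmatrix}\), which has determinant \(1\). By \Cref{lem:transformation-law}, \(\lambda_{f,0}(h)=\Res_0\bigl[h\,\dd z\wedge\dd w/(z^3,w-z^2)\bigr]\). Now substitute \(w'=w-z^2\); this coordinate change has Jacobian \(1\). Applying \Cref{lem:iterated-reduction} with \(w'\) in the role of the normal variable, we obtain that \(\lambda_{f,0}(h)\) is the coefficient of \(z^2\) in \(h(z,z^2)\). Thus \(\lambda_{f,0}(1)=0\) and \(\lambda_{f,0}(z^2)=1\), and so \(\Ind_{0,D}^{\mathrm{bdry}}(f;y)=2y^2\).

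Evaluating at \(y=0\) gives \(\LT_{0,D}^{\log}(f)=0\), and evaluating at \(y=-1\) gives \(\Ind_{0,D}^{\mathrm{dR},\log}(f)=2\).

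As a consistency check, \Cref{thm:normal-cone-specialisation} applies with \(d=(3,0)\) and \(e=2\). The rescaled equations are \(Q_1=z^3\) and \(Q_2=w-t^2z^2\). Extracting the \(t^2\) coefficient should reproduce the same value, which illustrates \Cref{prop:numerical-canonicity}.

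The main obstacle is the reduction in \(A_{f,0}\): one must use the hidden relation \(z^3\in I_{f,0}\) both to simplify \(1/(1-w)\) and to choose residue generators that avoid the degenerate initial forms of the coordinate presentation.
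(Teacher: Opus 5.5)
Your proof is correct, but it reaches $2y^2$ by a more direct route than the paper. The paper never evaluates $\lambda_{f,0}$ directly. It feeds the admissible presentation $(z^3,w-z^2)$ into \Cref{thm:normal-cone-specialisation}. It then computes the relative residue of the rescaled system $(z^3,\,w-t^2z^2)$ with numerator $1+y+2y^2t^2z^2/(1-w)$, obtaining $2y^2t^2$, and reads off the coefficient of $t^{e(q)}=t^2$.

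You instead evaluate the residue of the original problem. The determinant-one change of generators, the coordinate $w'=w-z^2$ and \Cref{lem:iterated-reduction} identify $\lambda_{f,0}(h)$ with the coefficient of $z^2$ in $h(z,z^2)$. Putting $w'$ first to match the lemma costs one sign in the denominators and one in the volume form, and these cancel. At $t=1$ the paper's rescaled system is $(z^3,w-z^2)$ itself. So your residue is the value at $t=1$ of the same relative residue $2y^2t^2$ whose $t^2$-coefficient the paper extracts at $t=0$.

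Your version is more elementary. It uses neither normal admissibility nor the finite flat family and relative trace of \Cref{thm:relative-trace}. It also gives an explicit formula for the duality functional on $A_{f,0}\simeq\C\{z\}/(z^3)$, valid for any numerator. For that reason the reduction $z^2/(1-w)\equiv z^2$ is optional, since the $z^2$-coefficient of $z^2/(1-z^2)$ is already $1$. Likewise, your closing remark about avoiding degenerate initial forms concerns only your consistency check, not the proof itself.

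The paper's version is chosen for a different purpose: to illustrate the normal-cone specialisation in exactly the case where the coordinate presentation fails to be admissible. It exhibits the index as the leading $t^{e(q)}$-coefficient of a holomorphic relative residue, with the constant term $1+y$ contributing nothing for every $t$.
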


\begin{proof}
The fixed-point algebra and the admissible presentation are given in \Cref{ex:hidden-initial-relation}. In the logarithmic frame \((\dd z/z,\dd w)\),
\[
f^*\left(\frac{\dd z}{z}\right)
=
\frac{\dd z}{z}-\frac{\dd w}{1-w},
\qquad
f^*(\dd w)=2z^2\frac{\dd z}{z}.
\]
Hence
\begin{equation}\label{eq:hidden-log-determinant}
\det(1+yJ_f^{\log})
=
1+y+\frac{2y^2z^2}{1-w}.
\end{equation}
For the presentation \(q_1=z^3\), \(q_2=w-z^2\), one has \(e(q)=2\), \(\Delta_q=1\), and
\[
Q_1=z^3,
\qquad
Q_2=w-t^2z^2.
\]
The relative residue is
\begin{align*}
\mathscr R_{q,\,\det(1+yJ_f^{\log})(tz,w)}(t)
&=
\Res_0
\left[
\frac{1+y+2y^2t^2z^2/(1-w)}{z^3,w-t^2z^2}
\,\dd z\wedge\dd w
\right]\\
&=2y^2t^2.
\end{align*}
The constant term \(1+y\) has zero residue with denominator \(z^3\), whereas the remaining term has residue \(2y^2t^2\). Formula \eqref{eq:hidden-local-polynomial} follows from \Cref{thm:normal-cone-specialisation}.
\end{proof}

The next result gives a coupled family in every dimension and an exact contact formula.

\begin{proposition}\label{prop:higher-dimensional-contact}
Let \(m\geq1\), let \(r\geq1\), and let \(m_1,\ldots,m_m\geq2\). On
\[
(\C^{m+1},0),
\qquad
D=\{z=0\},
\]
with coordinates \((z,y_1,\ldots,y_m)\), define a germ \(f\) by
\begin{align}
 f_z
 &=z-z^{r+1}(1+y_1),\label{eq:family-normal-component}\\
 f_{y_1}
 &=y_1-y_1^{m_1}-z,\label{eq:family-first-tangential}\\
 f_{y_j}
 &=y_j-y_j^{m_j}-y_{j-1},
 \qquad 2\leq j\leq m.\label{eq:family-tangential-chain}
\end{align}
Put \(M=m_1\cdots m_m\). Then the origin is an isolated normally resonant fixed point, and
\begin{equation}\label{eq:family-local-algebra}
A_{f,0}
\simeq
\C\{y_m\}/\bigl(y_m^{(r+1)M}\bigr).
\end{equation}
In particular,
\[
\length(A_{f,0})=(r+1)M.
\]
The presentation obtained by removing the unit \(1+y_1\) from the first fixed equation is normally admissible, and the logarithmic de Rham index is
\begin{equation}\label{eq:higher-dimensional-contact-index}
\Ind^{\mathrm{dR},\log}_{0,D}(f)=rM.
\end{equation}
\end{proposition}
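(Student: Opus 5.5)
The plan is to compute the fixed-point ideal by elimination, exhibit the admissible presentation, and then evaluate the de Rham residue by comparing the logarithmic numerator with the Jacobian of the fixed-point sequence. The comparison uses the standard fact, already invoked in the proof of \Cref{prop:split-contact}, that the residue of the Jacobian determinant is the length.

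\emph{Step 1: the local algebra.} The coordinate fixed-point equations are
\[
g_0=z^{r+1}(1+y_1),\qquad g_1=y_1^{m_1}+z,\qquad g_j=y_j^{m_j}+y_{j-1}\quad(2\le j\le m).
\]
Since \(1+y_1\) is a unit, \(I_{f,0}=(z^{r+1},g_1,\dots,g_m)\). Working backwards from \(j=m\), the relations give
\[
y_{j-1}\equiv -y_j^{m_j},\qquad\text{so}\qquad y_{j-1}\equiv \pm y_m^{m_j\cdots m_m},\qquad z\equiv\pm y_m^{M}.
\]
Hence \(A_{f,0}\) is a quotient of \(\C\{y_m\}\), and the only remaining relation is \(z^{r+1}\equiv\pm y_m^{(r+1)M}\). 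This gives \eqref{eq:family-local-algebra}, the length \((r+1)M\), and isolatedness. Normal resonance is immediate from \(a=1-z^r(1+y_1)\), which satisfies \(a(0)=1\) because \(r\ge1\).

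\emph{Step 2: admissibility.} Take the presentation \(q=(z^{r+1},g_1,\dots,g_m)\). For it, \(A=\operatorname{diag}((1+y_1)^{-1},1,\dots,1)\), so \(\Delta_q=(1+y_1)^{-1}\). The normal orders are \(d_0=r+1\) and \(d_j=0\) for \(j\ge1\), and the initial forms are
\[
z^{r+1},\qquad y_1^{m_1},\qquad y_j^{m_j}+y_{j-1}\quad(j\ge2).
\]
These have finite colength, since \(y_1\) is nilpotent and then each \(y_j\) is nilpotent inductively. Also \(\sum_j d_j=r+1\ge 1=k\), so \(q\) is normally admissible in the sense of \Cref{def:admissible-presentation}, with \(e(q)=r\). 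This is the only point where \Cref{sec:normal-cone} is needed; it could also be used to organise the residue computation below, but that is not necessary.

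\emph{Step 3: the de Rham numerator.} In the frame \((\dd z/z,\dd y_1,\dots,\dd y_m)\), equation \eqref{eq:dlog-pullback} with \(a=1-z^r(1+y_1)\) gives the following entries of \(1-J_f^{\log}\).
\begin{itemize}
\item The \((z,z)\) entry is \(rz^r(1+y_1)/a\), and the \((z,y_1)\) entry is \(z^r/a\).
\item The \(y_1\) row has \(z\) in the \(\dd z/z\) slot and \(m_1y_1^{m_1-1}\) on the diagonal.
\item For \(j\ge2\), the \(y_j\) row has \(m_jy_j^{m_j-1}\) on the diagonal and \(1\) in the \(y_{j-1}\) slot.
\end{itemize}
The matrix is lower bidiagonal except for the \(2\times2\) \((z,y_1)\) block. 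Expanding, and with signs to be checked carefully,
\[
\det(1-J_f^{\log})=\frac{z^r}{a}\Bigl(r(1+y_1)m_1y_1^{m_1-1}\mp z\Bigr)\prod_{j\ge2}m_jy_j^{m_j-1}.
\]
The ordinary Jacobian \(\operatorname{Jac}(g)\) has exactly the same shape, except that the \((z,z)\) entry is \((r+1)z^r(1+y_1)\) and the \((z,y_1)\) entry is \(z^{r+1}\). The \(z^{r+1}\) term dies in \(A_{f,0}\). So I aim to show that, modulo \(I_{f,0}\),
\[
\det(1-J_f^{\log})\equiv\frac{r}{r+1}\,\frac{1}{a}\operatorname{Jac}(g)+\bigl(\text{a term annihilated by }\lambda_{f,0}\bigr).
\]
The natural candidate for the error term is the coupling contribution \(\mp z^{r+1}\prod_{j\ge2}m_jy_j^{m_j-1}/a\), which already lies in the ideal.

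\emph{Step 4: evaluation.} Apply \(\lambda_{f,0}\), using \(\lambda_{f,0}(\operatorname{Jac}(g))=\length A_{f,0}=(r+1)M\). To remove the unit \(a^{-1}\), note that \(z^r\operatorname{Jac}\)-type classes live in the socle direction: \(z^ru\equiv z^ru(0)\) modulo the ideal plus higher terms, because \(z^{r+1}=0\) and \(z\) generates \(y_m^M\). Since \(a^{-1}\equiv1\) modulo \(z^r\), I expect \(a^{-1}\operatorname{Jac}(g)\equiv\operatorname{Jac}(g)\) up to elements of \(z^{2r}(\cdots)\subset I_{f,0}\). This would give
\[
\Ind^{\mathrm{dR},\log}_{0,D}(f)=\frac{r}{r+1}(r+1)M=rM.
\]

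\emph{Main obstacle.} The delicate step is Step 3–4: justifying the comparison with \(\operatorname{Jac}(g)\) despite the coupling entries. If the comparison proves awkward, I would instead compute \(\lambda_{f,0}\) directly. Using \Cref{lem:transformation-law}, pass to the triangular presentation
\[
\Bigl(y_m^{(r+1)M},\;y_{m-1}+y_m^{m_m},\;\dots,\;z+y_1^{m_1}\Bigr)
\]
by eliminating successively, and then apply \Cref{lem:iterated-reduction} in the variables \(z,y_1,\dots,y_{m-1}\). This reduces everything to a one-variable residue
\[
\Res\Bigl[\frac{\tilde h(y_m)\,\dd y_m}{y_m^{(r+1)M}}\Bigr],
\]
which is a coefficient extraction that can be checked against \(rM\).
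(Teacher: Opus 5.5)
Your proof is correct, but for the index it takes a different route from the paper. The paper goes through the normal-cone specialisation. It rescales \(z\mapsto tz\) in \(\det(1-J_f^{\log})\Delta_q\) and extracts \(\widetilde h(0,z,y)=rz^r\prod_{j}m_jy_j^{m_j-1}\). It then uses \Cref{cor:normal-cone-class} to pass to the special fibre \((z^{r+1},y_1^{m_1},y_2^{m_2}+y_1,\dots,y_m^{m_m}+y_{m-1})\), where \(\widetilde h(0)\) is \(\tfrac{r}{r+1}\) times the Jacobian of that decoupled regular sequence. You instead apply the Jacobian--length formula directly to the coordinate sequence \(g\) in \(A_{f,0}\). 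Your comparison in fact holds exactly:
\[
\det(1-J_f^{\log})-\frac{r}{r+1}\,a^{-1}\operatorname{Jac}(g)
=-\frac{1}{r+1}\,a^{-1}z^{r+1}\prod_{j\geq2}m_jy_j^{m_j-1}\in I_{f,0}.
\]
Moreover \(\operatorname{Jac}(g)\) is divisible by \(z^r\) and \(a^{-1}-1=z^r(1+y_1)a^{-1}\). Hence \((a^{-1}-1)\operatorname{Jac}(g)\in(z^{2r})\subset(z^{r+1})\subset I_{f,0}\), because \(r\geq1\), and the index is \(\tfrac{r}{r+1}(r+1)M=rM\).

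What each route buys:
\begin{itemize}
\item Your route needs neither the finite flat family nor relative duality or base change. \Cref{sec:normal-cone} enters only through the colength check for admissibility, which the statement asserts separately anyway.
\item The paper's route gives the conceptual reading. On the special fibre the normal and tangential equations separate, so the index is visibly the normal contact order times the tangential multiplicity. It also illustrates the normal-cone formula in a case with positive excess \(e(q)=r\).
\end{itemize}

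Three small slips, none load-bearing:
\begin{itemize}
\item In \(\operatorname{Jac}(g)\) the \((y_1,z)\) entry is \(1\) rather than \(z\).
\item The sign in your \(\mp z\) is \(-\).
\item The assertion \(z^ru\equiv z^ru(0)\) modulo \(I_{f,0}\) is false as stated. Under \(A_{f,0}\simeq\C\{y_m\}/(y_m^{(r+1)M})\) one has \(z^r\mapsto\pm y_m^{rM}\), so \(z^ry_m\neq0\) since \(M\geq2\). It is \(\operatorname{Jac}(g)\), which maps to a non-zero multiple of \(y_m^{(r+1)M-1}\), and not \(z^r\), that lies in the socle.
\end{itemize}
Your \(z^{2r}\) argument is the correct justification, and the one-variable fallback is unnecessary.
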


\begin{proof}
Write
\[
a(z,y_1)=1-z^r(1+y_1).
\]
Then \(f_z=za\) and \(a(0)=1\), so \(a\) is a unit and \(f\) is strict along \(D\). The induced normal multiplier at the origin is one, hence the fixed point is normally resonant.
The fixed-point equations are
\begin{align}
 g_0&=z^{r+1}(1+y_1),\label{eq:family-fixed-normal}\\
 g_1&=y_1^{m_1}+z,\label{eq:family-fixed-first}\\
 g_j&=y_j^{m_j}+y_{j-1},
 \qquad 2\leq j\leq m.\label{eq:family-fixed-chain}
\end{align}
Since \(1+y_1\) is a unit, the fixed-point ideal is equivalent to
\begin{equation}\label{eq:family-equivalent-ideal}
Q=
\bigl(
 z^{r+1},
 y_1^{m_1}+z,
 y_2^{m_2}+y_1,
 \ldots,
 y_m^{m_m}+y_{m-1}
\bigr).
\end{equation}
The last \(m\) equations eliminate \(z,y_1,\ldots,y_{m-1}\) successively. Up to a non-zero scalar, the image of \(z\) is \(y_m^M\). The remaining equation is therefore \(y_m^{(r+1)M}=0\), which proves \eqref{eq:family-local-algebra} and the asserted length.
Use the presentation
\[
q_0=z^{r+1},
\qquad
q_j=g_j\quad(1\leq j\leq m).
\]
Then \(q=A g\) with \(\Delta_q=\det A=(1+y_1)^{-1}\). Its normal orders are
\[
d_0=r+1,
\qquad
d_1=\cdots=d_m=0,
\]
so the excess order in \eqref{eq:excess-order} is \(e=r\). The rescaled presentation is
\begin{align*}
Q_0&=z^{r+1},\\
Q_1&=y_1^{m_1}+tz,\\
Q_j&=y_j^{m_j}+y_{j-1},
\qquad 2\leq j\leq m.
\end{align*}
Its special fibre is defined by
\begin{equation}\label{eq:family-special-fibre}
\bigl(
 z^{r+1},
 y_1^{m_1},
 y_2^{m_2}+y_1,
 \ldots,
 y_m^{m_m}+y_{m-1}
\bigr).
\end{equation}
This ideal has finite colength \((r+1)M\), so the presentation is normally admissible.

It remains to compute the numerator at \(y=-1\). In the logarithmic cotangent frame
\[
\left(
\frac{\dd z}{z},\dd y_1,\ldots,\dd y_m
\right),
\]
the matrix of \(1-J_f^{\log}\), up to transposition, is
\begin{equation}\label{eq:family-log-matrix}
\begin{pmatrix}
\dfrac{rz^r(1+y_1)}{a}
&\dfrac{z^r}{a}
&0&\cdots&0
\\[2mm]
z
&m_1y_1^{m_1-1}
&0&\cdots&0
\\
0&1&m_2y_2^{m_2-1}&&0
\\
\vdots&&\ddots&\ddots&
\\
0&\cdots&0&1&m_my_m^{m_m-1}
\end{pmatrix}.
\end{equation}
Consequently,
\begin{equation}\label{eq:family-exact-determinant}
\det(1-J_f^{\log})
=
\frac{z^r}{a}
\left(
rm_1(1+y_1)y_1^{m_1-1}-z
\right)
\prod_{j=2}^{m}m_jy_j^{m_j-1}.
\end{equation}
After replacing \(z\) by \(tz\) and multiplying by \(\Delta_q=(1+y_1)^{-1}\), the transformed numerator has the form
\[
\det(1-J_f^{\log})(tz,y)\Delta_q(tz,y)
=
t^r\widetilde h(t,z,y),
\]
where
\begin{equation}\label{eq:family-leading-numerator}
\widetilde h(0,z,y)
=
r z^r
\prod_{j=1}^{m}m_jy_j^{m_j-1}.
\end{equation}
Since \(e=r\), \Cref{cor:normal-cone-class} applies, and we obtain
\begin{align}\label{eq:family-special-residue}
\Ind^{\mathrm{dR},\log}_{0,D}(f)
&=
\Res_0
\left[
\frac{
 r z^r\prod_{j=1}^{m}m_jy_j^{m_j-1}
 \,\dd z\wedge\dd y_1\wedge\cdots\wedge\dd y_m
}{
 z^{r+1},y_1^{m_1},y_2^{m_2}+y_1,\ldots,
 y_m^{m_m}+y_{m-1}
}
\right].
\end{align}
The Jacobian determinant of the regular sequence in the denominator of \eqref{eq:family-special-residue} is
\[
(r+1)z^r\prod_{j=1}^{m}m_jy_j^{m_j-1}.
\]
By the standard Jacobian formula for Grothendieck residues, the residue of the Jacobian of a zero-dimensional complete intersection is its length; see \cite[Chapter~5]{GriffithsHarris}. Since the quotient defined by \eqref{eq:family-special-fibre} has length \((r+1)M\), equation \eqref{eq:family-special-residue} gives
\[
\Ind^{\mathrm{dR},\log}_{0,D}(f)
=
\frac{r}{r+1}(r+1)M
=rM.
\]
\end{proof}

The factor \(r\) is the order of the normal multiplier's contact with the identity, while
\[
M=
\length
\frac{\C\{y_1,\ldots,y_m\}}
{(y_1^{m_1},y_2^{m_2}+y_1,\ldots,y_m^{m_m}+y_{m-1})}
\]
is the multiplicity of the tangential special fibre. Thus \eqref{eq:higher-dimensional-contact-index} is precisely the product of the normal contact order and the tangential multiplicity, although the original fixed equations are not separated.

\begin{example}\label{ex:coupled-resonance}
For \(m=1\), \(r=1\), and \(m_1=2\), \Cref{prop:higher-dimensional-contact} gives
$
f(z,w)
=
\bigl(z(1-z(1+w)),\,w-w^2-z\bigr).
$
The local fixed-point algebra is
\[
A_{f,0}\simeq\C\{w\}/(w^4),
\]
and the coordinate special fibre is defined by \((z^2(1+w),w^2)\), equivalently by \((z^2,w^2)\). With \(a=1-z(1+w)\), the exact logarithmic determinant is
\[
\det(1-J_f^{\log})
=
\frac{z\bigl(2w(1+w)-z\bigr)}{a}.
\]
The proposition yields
\[
\Ind^{\mathrm{dR},\log}_{0,D}(f)=2.
\]
Because the tangential fixed equation \(w^2+z\) contains the normal variable, this calculation does not follow from \Cref{prop:split-contact}.
\end{example}

\section{The logarithmic de Rham specialisation}\label{sec:de-rham}

Let \(U=X\setminus D\), and suppose first that \(E=\cO_X\). The exterior differential makes
\begin{equation}\label{eq:log-de-rham-complex}
\Omega_X^\bullet(\log D)
=
\left[
\cO_X\xrightarrow{\dd}
\Omega_X^1(\log D)\xrightarrow{\dd}\cdots\xrightarrow{\dd}
\Omega_X^n(\log D)
\right]
\end{equation}
a complex. Since pull-back by \(f\) commutes with \(\dd\), it acts on its hypercohomology.

The logarithmic Poincar\'e lemma gives a quasi-isomorphism
\begin{equation}\label{eq:log-comparison}
\Omega_X^\bullet(\log D)\simeq Rj_*\C_U,
\qquad j:U\hookrightarrow X;
\end{equation}
see \cite{DeligneHodgeII,DeligneRegular}. Thus
\[
\mathbb H^q(X,\Omega_X^\bullet(\log D))\simeq H^q(U,\C).
\]

\begin{theorem}\label{thm:de-rham-lefschetz}
Under the hypotheses of \Cref{thm:y-lefschetz}, with \(E=\cO_X\), one has
\begin{align}\label{eq:de-rham-lefschetz}
\sum_j(-1)^j\Tr\bigl(f^*:H^j(U,\C)\to H^j(U,\C)\bigr)
&=
L_{-1}^{\log}(f;D)
\\
&=
\sum_{p\in\Fix(f)}
\Res_p
\left[
\frac{
\det(1-J_f^{\log})\,
\dd x_1\wedge\cdots\wedge\dd x_n
}{x_1-f_1,\ldots,x_n-f_n}
\right].
\end{align}
\end{theorem}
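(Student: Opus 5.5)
The second equality is the specialisation \(y=-1\) of \Cref{thm:y-lefschetz} with \((E,\Phi)=(\cO_X,\operatorname{id})\). In that case \(\tau_{\Phi,p}=1\), and \(\det(1+yJ_f^{\log})\) at \(y=-1\) becomes \(\det(1-J_f^{\log})\). The real content is therefore the first equality: the alternating trace on \(H^\bullet(U,\C)\) equals \(L_{-1}^{\log}(f;D)=\sum_r(-1)^r\sum_q(-1)^q\Tr(H^q(\Phi_r)\circ f^*)\). I would prove it in three steps.

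First, identify the action. The morphisms \(\Phi_r=\wedge^r\dd f_{\log}^*\), together with pull-back of sections, define a morphism of complexes \(f^\#:\Omega_X^\bullet(\log D)\to f_*\Omega_X^\bullet(\log D)\). It is compatible with \(\dd\) because \(f^*\) commutes with exterior differentiation, and \eqref{eq:dlog-pullback} shows that logarithmic forms pull back to logarithmic forms. On hypercohomology, \(f^\#\) induces an endomorphism of \(\mathbb H^\bullet(X,\Omega_X^\bullet(\log D))\). Under the comparison \eqref{eq:log-comparison} this endomorphism should be \(f^*\) on \(H^\bullet(U,\C)\). To check this, I would use the natural map \(\C_U\to j^*\Omega^\bullet_X(\log D)\) and its adjoint \(Rj_*\C_U\), noting that \(f|_U:U\to U\) (strictness gives \(f^{-1}(U)=U\)) pulls back constant functions compatibly. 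Since both pull-backs are induced by the same geometric map on \(U\), the quasi-isomorphism \(\Omega^\bullet_X(\log D)\simeq Rj_*\C_U\) intertwines them.

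Second, apply the Hodge-to-de Rham spectral sequence for the stupid filtration \(\sigma_{\geq r}\). Its \(E_1\)-page is \(E_1^{r,q}=H^q(X,\Omega^r_X(\log D))\), and \(f^\#\) acts on it by \(H^q(\Phi_r)\circ f^*\). The filtration is preserved because \(f^\#\) is degree-preserving, so \(f^\#\) acts compatibly on every page. The alternating trace of an endomorphism of a finite complex equals the alternating trace on its cohomology. Since all groups are finite-dimensional because \(X\) is compact, the alternating trace is therefore constant from page to page. The spectral sequence converges to the filtered hypercohomology, and traces are additive on graded pieces. Hence
\[
\sum_{r,q}(-1)^{r+q}\Tr\bigl(H^q(\Phi_r)\circ f^*\bigr)=\sum_j(-1)^j\Tr\bigl(f^*\mid\mathbb H^j\bigr).
\]
Degeneration at \(E_1\), which Deligne's theory provides in the Kähler or algebraic case, is not needed for this.

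Third, combine the two steps with \eqref{eq:log-comparison} to obtain the first equality, and \Cref{thm:y-lefschetz} at \(y=-1\) to obtain the second.

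I expect the main obstacle to be the first step: the compatibility of the comparison quasi-isomorphism with pull-back. \(Rj_*\) and \(f\) interact through base change, and \(f\) need not be finite or flat. The cleanest route is to use strictness, \(f^{-1}(U)=U\), to get a natural map \(f^{-1}Rj_*\C_U\to Rj_*f|_U^{-1}\C_U=Rj_*\C_U\). I would then verify commutativity of the resulting square with \(\Omega^\bullet_X(\log D)\) at the level of Godement or Čech resolutions. A secondary point is checking that the finite-dimensionality of coherent cohomology on compact \(X\) justifies the trace additivity in the spectral sequence argument.
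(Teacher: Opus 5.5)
Your proposal is correct and follows the paper's own proof. The paper gets the second equality by setting \(y=-1\) in \Cref{thm:y-lefschetz}, and the first from the \(f^*\)-equivariant spectral sequence \(E_1^{r,q}=H^q(X,\Omega_X^r(\log D))\Rightarrow\mathbb H^{r+q}\), the page-to-page invariance of the alternating trace, and the comparison \(\mathbb H^\bullet(X,\Omega_X^\bullet(\log D))\simeq H^\bullet(U,\C)\). Your additional checks fill in points the paper leaves implicit: that this comparison intertwines the two pull-backs, and that traces are additive on the graded pieces of the abutment.
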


\begin{proof}
The hypercohomology spectral sequence
\[
E_1^{r,q}=H^q\bigl(X,\Omega_X^r(\log D)\bigr)
\Longrightarrow
\mathbb H^{r+q}\bigl(X,\Omega_X^\bullet(\log D)\bigr)
\]
is \(f^*\)-equivariant. The alternating trace is unchanged from one page of a finite spectral sequence to the next. The comparison \eqref{eq:log-comparison} therefore identifies its value on the first page with the Lefschetz number of \(f|_U\). The second equality is \Cref{thm:y-lefschetz} at \(y=-1\).
\end{proof}

We denote the first expression in \eqref{eq:de-rham-lefschetz} by \(\Lef(f|_U)\).

Combining \Cref{cor:boundary-vanishing} with \Cref{thm:de-rham-lefschetz} gives a fixed-point formula for the complement in which normally non-resonant points of the compactifying boundary disappear. Set
\[
\Fix_{\mathrm{res}}(f;D)
=
\{p\in\Fix(f)\cap D:1\in\Spec(\dd_Nf_p)\},
\]
and, for \(p\in U\), define
\[
\Ind_p^{\mathrm{dR}}(f)
=
\Res_p
\left[
\frac{\det(1-J_f)\,\dd x_1\wedge\cdots\wedge\dd x_n}
{x_1-f_1,\ldots,x_n-f_n}
\right].
\]

\begin{corollary}\label{cor:open-boundary-formula}
One has
\begin{align}\label{eq:open-boundary-formula}
\Lef(f|_U)
&=
\sum_{p\in\Fix(f)\cap U}
\Ind_p^{\mathrm{dR}}(f)
+
\sum_{p\in\Fix_{\mathrm{res}}(f;D)}
\Ind^{\mathrm{dR},\log}_{p,D}(f).
\end{align}
If an interior fixed point is non-degenerate, its contribution is one.
\end{corollary}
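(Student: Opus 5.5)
The plan is to start from \Cref{thm:de-rham-lefschetz}, which writes \(\Lef(f|_U)\) as the sum over all fixed points of the local residues with numerator \(\det(1-J_f^{\log})\). By \Cref{cor:interior-boundary-splitting} at \(y=-1\) and \(E=\cO_X\), \(\Phi=\operatorname{id}\), this sum splits as
\[
\sum_{p\in\Fix(f)\setminus D}\Ind_p^{\mathrm{int}}(f,\cO_X,\operatorname{id};-1)
+
\sum_{p\in\Fix(f)\cap D}\Ind_{p,D}^{\mathrm{dR},\log}(f).
\]
Since the trace class of the identity linearisation of \(\cO_X\) is \(1\), and \(J_f^{\log}=J_f\) away from \(D\), each interior term is exactly \(\Ind_p^{\mathrm{dR}}(f)\).

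For the boundary sum, I will partition \(\Fix(f)\cap D\) into the resonant set \(\Fix_{\mathrm{res}}(f;D)\) and its complement. A boundary point outside \(\Fix_{\mathrm{res}}(f;D)\) has no normal multiplier equal to \(1\), so \(\det(1-\dd_Nf_p)=\prod_i b_i(p)\neq0\). It is therefore normally non-resonant in the sense of \Cref{def:normal-nonresonance}, and \Cref{cor:boundary-vanishing} shows that its contribution at \(y=-1\) vanishes. The eigenvalues of the diagonal normal linearisation are the \(a_i(p)\), so the two descriptions of resonance agree. Discarding these terms gives \eqref{eq:open-boundary-formula}.

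For the final assertion, let \(p\in U\) be a non-degenerate fixed point. Then \(x-f(x)\) has invertible Jacobian at \(p\), so the local algebra is \(\C\) and the residue reduces to \(\det(1-J_f)(p)/\det(1-\dd f_p)\). Up to transposition, \(J_f(p)\) is the cotangent map \(\dd f_p^*\), whose determinant \(\det(1-\dd f_p^*)\) equals \(\det(1-\dd f_p)\), so the quotient is \(1\). Alternatively, and without assuming non-degeneracy, I will note that \(\det(1-J_f)\) is the Jacobian determinant of the sequence \(x_i-f_i\), as in the proof of \Cref{prop:split-contact}. Its residue is then the length of \(A_{f,p}\), which is \(1\) in the non-degenerate case.

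The main point to check carefully is the sign and transposition convention: the determinant of \(1-J_f\) computed in the cotangent frame must coincide with the Jacobian of \(x-f\) rather than differing by a sign. Since \(f^*\dd x_i=\sum_j\partial_j f_i\,\dd x_j\), the matrix of \(J_f\) is \((\partial f_i/\partial x_j)\) up to transposition, and transposition does not affect the determinant. Everything else is bookkeeping using the cited results.
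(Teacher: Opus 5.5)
Your proposal is correct and follows essentially the paper's own argument. You split the terms of \Cref{thm:de-rham-lefschetz} into interior and boundary points, use $J_f^{\log}=J_f$ on $U$, discard the normally non-resonant boundary terms by \Cref{cor:boundary-vanishing}, and evaluate a non-degenerate interior term as $\det(1-\dd f_p^*)/\det(1-\dd f_p)=1$. Your additional Jacobian-residue remark, identifying every interior contribution with $\length(A_{f,p})$, is a valid slight strengthening that the paper does not state.
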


\begin{proof}
Normally non-resonant boundary terms vanish by \Cref{cor:boundary-vanishing}. At an interior point, \(J_f^{\log}=J_f\), so the local term is \(\Ind_p^{\mathrm{dR}}(f)\). In the non-degenerate case it equals
\[
\frac{\det(1-\dd f_p^*)}{\det(1-\dd f_p)}=1.
\]
The remaining boundary terms are precisely those indexed by \(\Fix_{\mathrm{res}}(f;D)\).
\end{proof}

More generally, let \(\mathbb V\) be a local system on \(U\) with regular singularities, and let \((E,\nabla)\) be its Deligne extension with integrable logarithmic connection
\[
\nabla:E\longrightarrow E\otimes\Omega_X^1(\log D).
\]
If \(\Phi:f^*(E,\nabla)\to(E,\nabla)\) is horizontal, the same argument applies to the logarithmic de Rham complex \(\DR_{\log}(E,\nabla)\), whose hypercohomology computes the cohomology of \(U\) with coefficients in \(\mathbb V\); see \cite{DeligneRegular}. We do not pursue coefficient systems further here.

\section{Global examples}\label{sec:examples}

We conclude with three compact examples exhibiting resonant boundary contributions and non-trivial fixed-point formulae for the complement.

\begin{example}\label{ex:projective-line}
Let
$
X=\mathbb P^1, \ 
D=\{[1:0]\},
$
and choose \(c\in\C^*\). In the affine coordinate \(z=Z_1/Z_0\), consider the automorphism
\begin{equation}\label{eq:parabolic-map}
f_c(z)=\frac{z}{1+cz},
\qquad
f_c([Z_0:Z_1])=[Z_0+cZ_1:Z_1].
\end{equation}
The divisor \(D=\{z=0\}\) is strictly preserved. The fixed equation is
\[
z-f_c(z)=\frac{cz^2}{1+cz},
\]
so the only fixed point is the resonant boundary point \(z=0\), with contact order one. The logarithmic cotangent multiplier is
\[
J_{f_c}^{\log}(z)=\frac{zf_c'(z)}{f_c(z)}=\frac{1}{1+cz}.
\]
Consequently,
\begin{align*}
\Ind_{0,D}^{\mathrm{bdry}}(f_c;y)
&=
\Res_0
\left[
\frac{1+y/(1+cz)}{cz^2/(1+cz)}\,\dd z
\right]
=1.
\end{align*}
Globally, \(\Omega_{\mathbb P^1}^1(\log D)\simeq\cO_{\mathbb P^1}(-1)\), which has no cohomology, and hence
\[
L_y^{\log}(f_c;D)=1.
\]
On \(U=X\setminus D\), the coordinate \(u=1/z\) identifies \(f_c\) with the translation \(u\mapsto u+c\). Thus \(\Lef(f_c|_U)=1\), in agreement with \Cref{prop:curve-contact}.
\end{example}

\begin{example}\label{ex:product-surface}
Let
\[
X=\mathbb P^1_z\times\mathbb P^1_w,
\qquad
D=\{z=0\}\times\mathbb P^1_w,
\]
and let
\[
f(z,w)=\left(\frac{z}{1+cz},\,\beta w\right),
\qquad
c\in\C^*,\quad \beta\in\C^*\setminus\{1\}.
\]
There are exactly two fixed points, \((0,0)\) and \((0,\infty)\), both resonant boundary points. The map splits into the parabolic normal factor of \Cref{ex:projective-line} and a non-degenerate tangential factor. Their local polynomials are
\[
\frac{1+y\beta}{1-\beta},
\qquad
\frac{1+y\beta^{-1}}{1-\beta^{-1}},
\]
and their sum is
\begin{equation}\label{eq:surface-local-sum}
1-y.
\end{equation}
On the other hand, the K\"unneth formula makes the logarithmic \(y\)-Lefschetz polynomial multiplicative for this product. The first factor contributes one by \Cref{ex:projective-line}, while the second contributes \(1-y\). Therefore
\[
L_y^{\log}(f;D)=1-y,
\]
which agrees with \eqref{eq:surface-local-sum}. At \(y=-1\), the complement is \(\C\times\mathbb P^1\), and the Lefschetz number is two. Each resonant boundary point contributes one.
\end{example}

\begin{example}\label{ex:mixed-blow-up}
Let \(A:\mathbb P^2\to\mathbb P^2\) be the automorphism
$A[X:Y:Z]=[X+Y:Y:\lambda Z],
\ \lambda\in\C^*\setminus\{1\},
$
and let \(p=[1:0:0]\). Put
\[
X=\operatorname{Bl}_p\mathbb P^2,
\qquad
D=\pi^{-1}(p).
\]
The automorphism lifts to a strict self-map \(f:(X,D)\to(X,D)\). Its fixed-point set consists of the interior point
\[
q=[0:0:1]
\]
and two points \(p_0,p_\infty\) on the exceptional divisor, corresponding, respectively, to the eigendirections associated with the eigenvalues \(1\) and \(\lambda\) in \(T_p\mathbb P^2\).
Near \(p\), use affine coordinates \(x=Y/X\) and \(z=Z/X\). Then
\[
A(x,z)=\left(\frac{x}{1+x},\frac{\lambda z}{1+x}\right).
\]
In the blow-up chart \(x=u\), \(z=uv\), the lifted map is
\[
(u,v)\longmapsto\left(\frac{u}{1+u},\lambda v\right).
\]
Thus \(p_0=(0,0)\) is resonant in the normal direction, and the product calculation gives
\begin{equation}\label{eq:mixed-p0}
\Ind_{p_0,D}^{\mathrm{bdry}}(f;y)
=
\frac{1+y\lambda}{1-\lambda}.
\end{equation}
In the second chart \(z=s\), \(x=st\), one has
\[
(s,t)\longmapsto
\left(\frac{\lambda s}{1+st},\lambda^{-1}t\right).
\]
The point \(p_\infty=(0,0)\) is normally non-resonant, and \Cref{cor:nondegenerate-formula} gives
\begin{equation}\label{eq:mixed-pinfty}
\Ind_{p_\infty,D}^{\mathrm{bdry}}(f;y)
=
\frac{(1+y)(1+y\lambda^{-1})}
{(1-\lambda)(1-\lambda^{-1})}.
\end{equation}
Finally, in affine coordinates \(a=X/Z\), \(b=Y/Z\) centred at \(q\), the differential is the Jordan matrix
\[
\lambda^{-1}
\begin{pmatrix}
1&1\\
0&1
\end{pmatrix}.
\]
Hence \(q\) is non-degenerate and
\begin{equation}\label{eq:mixed-q}
\Ind_q^{\mathrm{int}}(f;y)
=
\frac{(1+y\lambda^{-1})^2}{(1-\lambda^{-1})^2}.
\end{equation}
A direct simplification of \eqref{eq:mixed-p0}--\eqref{eq:mixed-q} yields
\begin{equation}\label{eq:mixed-local-sum}
\Ind_q^{\mathrm{int}}(f;y)
+
\Ind_{p_0,D}^{\mathrm{bdry}}(f;y)
+
\Ind_{p_\infty,D}^{\mathrm{bdry}}(f;y)
=1-y.
\end{equation}

The complement \(U=X\setminus D\) is \(\mathbb P^2\setminus\{p\}\), an affine-line bundle over \(\mathbb P^1\). Its cohomology is that of \(\mathbb P^1\), and the induced automorphism acts trivially on \(H^0(U,\C)\) and \(H^2(U,\C)\). The logarithmic Hodge-to-de Rham spectral sequence degenerates at its first page; the only non-zero graded pieces occur in bidegrees \((0,0)\) and \((1,1)\), each with trace one. Therefore
\[
L_y^{\log}(f;D)=1-y,
\]
in agreement with \eqref{eq:mixed-local-sum}. At \(y=-1\), the interior point and the resonant boundary point contribute one each, whereas the normally non-resonant boundary point contributes zero. Thus \(\Lef(f|_U)=2\).
\end{example}

\end{document}